\newcommand{\KK}{\mathcal{K}}
\newcommand{\RR}{\mathbb{R}}
\newcommand{\CC}{\mathbb{C}}
\newcommand{\NN}{\mathbb{N}}
\newcommand{\kron}{\otimes}
\newcommand{\slambda}{s}
\begin{document}
\title{Computing delay Lyapunov matrices and $\mathcal{H}_2$ norms for 
	large-scale problems}

\author{Wim Michiels\thanks{KU Leuven, Department of Computer Science,
	Heverlee, Belgium,
	\texttt{\{Wim.Michiels@cs.kuleuven.be}}	%
\and%
Bin Zhou\thanks{Harbin Institute of Technology, Center for Control Theory and Guidance Technology, Harbin, China
	\texttt{\{binzhou@hit.edu.cn}}	
}

\maketitle

\begin{abstract}
A delay Lyapunov matrix corresponding to an exponentially stable system of linear time-invariant delay differential equations can be characterized as the solution of a boundary value problem  involving a matrix valued  delay differential equation. This boundary value problem can be seen as a natural generalization of the classical Lyapunov matrix equation. Lyapunov matrices play an important role in constructing Lyapunov functionals and in $\mathcal{H}_2$ optimal control.   
In this paper we present a general approach for computing delay Lyapunov matrices and $\mathcal{H}_2$ norms for systems with multiple discrete delays, whose applicability extends towards problems where the matrices are large and sparse, and the associated positive semidefinite matrix (the ``right-hand side'' for the standard Lyapunov equation), has a low rank.   
The problems addressed are challenging, because besides that the boundary value problem is matrix valued with a structure that much harder to exploit than in the delay-free case, its solution is in the generic situation non-smooth.     
In contract to existing methods that are based on solving the boundary value problem directly, our method is grounded in solving standard Lyapunov equations of increased dimensions.  It combines several ingredients: i) a spectral discretization of the system of delay equations, ii) a targeted similarity transformation which induces a desired structure and sparsity pattern and, at the same time, favors accurate low rank  solutions of the corresponding Lyapunov equation, and iii) a Krylov method for large-scale matrix Lyapunov equations.  The structure of the problem is exploited in such a way that the final algorithm does not involve a preliminary discretization step, and provides a fully dynamic construction of approximations of increasing rank. Interpretations in terms of a projection method directly applied to a standard linear infinite-dimensional system equivalent to the original time-delay  system are also given. Throughout the paper two didactic examples are used to illustrate the properties of the problem, the challenges and methodological choices, while numerical experiments are presented at the end to illustrate the effectiveness of the algorithm.

\end{abstract}

\begin{keywords}
delay system, Lyapunov matrix equations, Krylov method
\end{keywords}

\section{Introduction}\label{sect:intro}

We consider a linear  system with multiple discrete delays,
\begin{equation}\label{sys-z}
\dot x(t)=A_0 x(t)+\sum_{i=1}^m A_i x(t-\tau_i),
\end{equation}
where $x(t)\in\RR^n$ is the state variable at time $t$, $A_i\in\mathbb{R}^{n\times n} are the system matrices$  and $\tau_i,\ i=1,\ldots,m$, represent time-delays, ordered such that
\[
0<\tau_1<\cdots<\tau_m.
\]
Throughout the paper we assume that the zero solution of (\ref{sys-z}) is exponentially stable, or equivalently, that all its characteristic roots, i.e., the solutions of equation
\[
\det\left(\lambda I-A_0-\sum_{i=1}^m A_i e^{-\lambda\tau_i}\right)=0,
\]
are confined to the open left half plane \cite{bookwim,bookniculescu}.
The fundamental solution  of (\ref{sys-z}), which we denote by $K:\RR \to\RR^{n\times n}$, is defined as the function satisfying
\begin{equation}\label{defK}
\left\{\begin{array}{ll}
\dot K(t)=A_0 K(t)+\sum_{i=1}^m A_i K(t-\tau_i),\ \  & \mathrm{for\ almost\ all\ } t\geq 0,\\
K(0)=I,  &
\\
K(t)=0,\ & \mathrm{for}\ t<0.
\end{array}\right.
\end{equation}
%
 The delay Lyapunov matrix for (\ref{sys-z}), associated with a positive semidefinite matrix, whose rank revealing decomposition reads as $B B^T$, 
where $B\in\mathbb{R}^{n\times r}$ is of full rank $r$, 
is defined as a function $P:\ \RR\to\RR^{n\times n}$ such that
\begin{equation}\label{defU}
P(t)=\int_0^{\infty} K(s) B B^T K^T(s+t) ds.
\end{equation}
Following from the exponential stability condition of (\ref{sys-z}), the delay Lyapunov matrix can be characterized as the \emph{unique} solution of the matrix valued ``boundary'' value problem
\begin{equation} \label{BVP}
\left\{\begin{array}{rcl}
\dot P(t)&=&P(t)A_0^T+\sum_{k=1}^m P(t-\tau_k)A_k^T,\ \ t\geq 0,  \\
P(-t)&=&P^T(t), \\
%
{-B B^T}&{=}&{P(0)A_0^T+A_0 P(0)} + \sum_{k=1}^m\left(P(-\tau_k)A_k^T+A_kP(\tau_k)\right),
\end{array}\right.
\end{equation}
see \cite{kharmatrix}.
There is also a dual formulation: with a positive semi-definite $(n\times n)$-matrix $C^T C $ we can associate Lyapunov matrix
\[
Q(t)=\int_0^{\infty} K^T(s) C^T C K(s+t)ds,
\]
which corresponds to the unique solution of  
\begin{equation} \label{BVPdual}
\left\{\begin{array}{rcl}
\dot Q(t)&=&  Q(t) A_0+\sum_{k=1}^m  Q(t-\tau_k) A_k,\ \ t\geq 0,  \\
Q(-t)&=&Q^T(t), \\
%
{-C^TC}&{=}&{Q(0)A_0+A_0^T Q(0)} + \sum_{k=1}^m\left(Q(-\tau_k)A_k+A_k^T Q(\tau_k)\right).
\end{array}\right.
\end{equation}
 Note that in the delay-free case the third equation in (\ref{BVP}) and the third one in (\ref{BVPdual}) reduce to a standard pair of primal and dual Lyapunov matrix equations.

The delay Lyapunov matrix is a building block for the construction of Lyapunov functionals of 
complete type, which are associated with necessary and sufficient stability conditions, see \cite{kharmatrix} for an excellent review. It should be remarked that in the literature on complete type Lyapunov functionals, the Lyapunov matrix is usually denoted by $U(t)$, which corresponds to $Q(t)$ in our adopted notation. Another comment is that in several works the Lyapunov matrix is alternatively defined directly as the solution of boundary value problem (\ref{BVP}) or (\ref{BVPdual}). In this way it can also be defined for an exponentially unstable system (provided the delay systems has no pair of eigenvalues $(\lambda_1,\lambda_2)$ such that $\lambda_1+\lambda_2=0$, see \cite{Kharitonov:2006:LYAPUNOVPLISCHKE,kharmatrix}), at the price that the aforementioned connection with the fundamental solution is lost.
 The Lyapunov matrix also plays a major role in the characterization of the $\mathcal{H}_2$ norm for system 
\begin{equation}\label{sys}
\left\{\begin{array}{lll}
\dot x(t)&=&A_0 x(t)+\sum_{i=1}^m A_i x(t-\tau_i)+B u(t), \\
y(t)&=&C x(t),
\end{array}\right.
\end{equation}
where  $u\in\CC^r$ is the input, $y\in\CC^{s}$ is the output and $B\in\mathbb{R}^{n\times r}$, respectively $C\in\mathbb{R}^{s\times n}$ are the input, respectively and out matrix of the model.  The transfer function of the system (\ref{sys}) is given by
\begin{equation}\label{defUpsilon}
\Upsilon(\slambda)=C\left(\slambda I-A_0-\sum_{i=1}^m A_i e^{-\slambda\tau_i}\right)^{-1} B.
\end{equation}
The $\mathcal{H}_2$ norm of $\Upsilon$ is defined in the frequency domain as
\[
\|\Upsilon\|_2
=\frac{1}{2\pi} 
\left(\int_{-\infty}^{\infty} \mathrm{Tr}\left( \Upsilon^*(\imath\omega) \Upsilon(\imath\omega)\right)d\omega\right)^{\frac{1}{2}},
\]
while an equivalent definition in the time-domain is given by
\[
\|\Upsilon\|_2= \left(\int_{0}^{\infty} \mathrm{Tr}\left( h^T(t) h(t)\right)dt\right)^{\frac{1}{2}},
\]
with $h$ the impulse response. The following proposition expresses the $\mathcal{H}_2$ norm in terms of the delay Lyapunov matrix, whose proof trivially follows from the identity 
\begin{equation}\label{relKh}
h(t)=CK(t) B.
\end{equation}
\begin{proposition} \cite[Theorem~1]{jarlebringtac} The $\mathcal{H}_2$ norm of (\ref{defUpsilon}) satisfies
\begin{equation}\label{prop-cor1}
\|\Upsilon\|_2^2=\mathrm{Tr}\left(C P(0) C^T\right),
\end{equation}
where $P(t)$ is the delay Lyapunov matrix associated with matrix $B B^T$.
\end{proposition}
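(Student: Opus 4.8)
The plan is to start from the time-domain characterization of the $\mathcal{H}_2$ norm already recalled in the introduction, namely $\|\Upsilon\|_2^2=\int_0^\infty \mathrm{Tr}\!\left(h^T(t)h(t)\right)dt$, and simply substitute the identity $h(t)=CK(t)B$ from \eqref{relKh}. This immediately gives
\[
\|\Upsilon\|_2^2=\int_0^\infty \mathrm{Tr}\!\left(B^T K^T(t)\,C^T C\,K(t)\,B\right)dt .
\]
Using linearity of the trace and of the integral together with the cyclic property $\mathrm{Tr}(XY)=\mathrm{Tr}(YX)$ to pull $C$ and $C^T$ to the outside, I would rewrite this as
\[
\|\Upsilon\|_2^2=\mathrm{Tr}\!\left(C\left(\int_0^\infty K(t)\,B B^T\,K^T(t)\,dt\right)C^T\right),
\]
and then recognize the inner integral as the delay Lyapunov matrix \eqref{defU} associated with $BB^T$ evaluated at $t=0$, i.e.\ $P(0)$. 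This yields \eqref{prop-cor1} and finishes the argument.

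For completeness I would add two supporting remarks. First, the identity $h(t)=CK(t)B$ is itself a consequence of the variation-of-constants formula for \eqref{sys}: with zero history and impulsive input $u=\delta$, the state equals $x(t)=K(t)B$, whence $y(t)=Cx(t)=CK(t)B$; this is exactly the content of \eqref{relKh}, which the excerpt already records. Second, the interchange of the (finite-dimensional, hence bounded) linear map $X\mapsto CXC^T$ with the integral, as well as the convergence of $\int_0^\infty K(t)BB^T K^T(t)\,dt$, are both justified by the standing exponential stability assumption on \eqref{sys-z}: it guarantees $\|K(t)\|$ decays exponentially, so the integrand is absolutely integrable and all manipulations are legitimate.

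Consequently, the only "obstacle" here is bookkeeping rather than analysis: one must invoke the impulse-response formula and the exponential decay of $K$ to legitimize the trace/integral rearrangements. Given these, the statement follows in a few lines, which is why the excerpt calls the proof trivial.
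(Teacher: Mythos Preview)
Your proposal is correct and follows precisely the route indicated in the paper: the paper states that the proof ``trivially follows from the identity'' \eqref{relKh}, and your argument is exactly the natural unpacking of that remark---substitute $h(t)=CK(t)B$ into the time-domain $\mathcal{H}_2$ expression, apply trace cyclicity, and identify the resulting integral with $P(0)$ via \eqref{defU}. Your added remarks on the variation-of-constants formula and on exponential stability justifying the integral manipulations are appropriate and complete the bookkeeping the paper leaves implicit.
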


The aim of this paper is to present a novel method for computing delay Lyapunov matrices and $\mathcal{H}_2$ norms with the following properties:
\begin{itemize}
	\item it is generally applicable, in the sense that there are no restrictions on the number and values of the delays, and the delay Lyapunov matrix can be easily computed or extended a posteriori beyond the interval $[-\tau_m,\ \tau_m]$; 
	\item the number of operations scales favorably with respect to the 
	dimension $n$ of the system matrices, particularly if the matrices are sparse, targeting (discretizations of)  partial differential equations (PDEs) with delay, provided that the rank $r$ of $B$ is small compared to $n$ 
\end{itemize}
In the description of the results we restrict ourselves to the computation of Lyapunov matrix $P(t)$, since $Q(t)$  can be obtained from  Lyapunov matrix  $P(t)$ associated with a ``transposed" system, inferred from the substitutions $A_k\leftarrow A_k^T,\ k=0,\ldots,m $ and $B\leftarrow C^T$. The latter directly follows from a comparison between (\ref{BVP}) and (\ref{BVPdual}).

The characterization (\ref{BVP}) provides a natural way to compute the Lyapunov matrix, and the $\mathcal{H}_2$ norm via formula (\ref{prop-cor1}). However, there are major challenges. First, when making the leap from ordinary to delay differential equations, the algebraic Lyapunov matrix equation is replaced by a \emph{matrix-valued} boundary value problem with delay. Second, bringing the equation in triangular form using a Schur decomposition, which forms the basis of the celebrated Bartels-Stewart algorithm for the matrix Lyapunov equation, is no longer possible.  
Third, it has been shown in \cite{jarlebringtac} that function $\mathbb{R} \ni t\mapsto P(t)$ may be non-smooth. The function is continuous, but it may be not be differentiable at $t=0$. On the interval $[0,\ \infty)$, to which we restrict in this paper because of the second condition in (\ref{BVP}), it is continuously differentiable, yet the second derivative might be discontinuous at $t=\tau_i,\ i=1,\ldots,m$, as we shall illustrate in the next section.

In the present literature two approaches for solving (\ref{BVP}) can be identified.  
The first one, the so-called direct approach, is based on approximating the solution on an interval by a matrix polynomial or a piecewise matrix polynomial and, besides imposing the boundary conditions and continuity requirements, determining the coefficients by collocation conditions for the differential equation, see, e.g., \cite{Huescaetal2009,jarlebringtac}.  With $N$ the number of collocation points, this results in a linear system of equations in $\mathcal{O}(n^2 N)$ variables. The convergence of the obtained approximations to the solution as a function of $N$ might be slowed down by the lack of smoothness of the solution discussed above, see \cite{jarlebringtac} for a detailed analysis.
The second  approach can be interpreted as a shooting method. It is applicable only if the time-delays are commensurate,\ i.e.,\ $\tau_i= n_i h$ for some $h>0$ and $n_i\in\NN,\  i=1,\ldots,m$, and it exploits that, in this case, the solution of (\ref{BVP}) is piecewise smooth (more precisely, smooth on intervals of form $(i h,\ (i+1)h),\ i\in\mathbb{Z}$).  Then (\ref{BVP}) can be reformulated as a standard boundary value problem for an ordinary differential equation of dimensions $2n^2 n_m$ on the interval $[0,\ h]$. For the latter boundary value problem, the transition between starting and end time can be determined explicitly in the form of the action of a matrix exponential (the so-called semi-analytic approach \cite{kharmatrix,wimmarco}) or by a numerical time-integration scheme \cite{jarpol}.

The common factor that leads to a poor scalability of the above vectorization based approaches with respect to the dimension $n$ is that they rely on solving a system of equations in $n^2$ variables, possibly multiplied with a large factor, hence when using  a direct solver the number of elementary operations amounts to $\mathcal{O}(n^6)$ operations.   To the best of the authors' knowledge the only available method that allows to address large-scale problems is the one presented in \cite{jarpol} for the single delay case, which falls under the umbrella of shooting methods, with the transition map determined by time-integration.  The key idea behind this approach, which has been shown to be effective for problems with  $n$ up to $\approx 1000$, is to solve the linear system of equations arising from the shooting method using a preconditioned Krylov method, where the preconditioner  is determined from the corresponding problem without delay. The latter allows an application of the preconditioner using $\mathcal{O}(n^3)$ operations.  This approach is complementary to the presented approach, which has the distinctive feature that it grounded in solving standard Lypunov matrix equations.


In Section~\ref{secfinal} we present a spectral discretization of equation (\ref{sys}) into an ordinary equation of dimensions $(N+1)n$, where $N$ determines the resolution of  the discretization. This allows us to obtain approximations of delay Lyapunov matrices and $\mathcal{H}_2$ norms from solving standard Lyapunov matrix equations. We also show how using a transformation a favorable structure can be imposed.
The main results are obtained in Section~\ref{secproject}, where among others projections on Krylov spaces are used to approximate the solutions of these Lyapunov equations, resulting in a dynamic construction of Lyapunov matrix approximations. Note that Krylov methods constitute an established approach for solving large scale matrix Lyapunov equations, see, e.g., \cite{simoncinisirev,simoncini,druskin} and the references therein.
We will show that several methodological choices can be made in such a way that the overall algorithm does not depend any more on parameter $N$ (the only condition is that it is sufficiently large with respect to the number of iterations for building the Krylov space). This property is at the basis of an interpretation of in terms of a projection method  applied directly to a linear infinite-dimensional system equivalent  to the original delay system.  In this sense the algorithm complements the set of ``discretizaton free" algorithms for solving nonlinear eigenvalue problems and associated problems in \cite{jarlebring-sisc,jarlebring-tensor}.
Numerical experiments are reported in Section~\ref{secapplic} and some concluding remark are formulated in Section~\ref{secconcl}. Preliminary results regarding the computation of $\mathcal{H}_2$ norms have been presented in~\cite{jelle}.

\section{Finite-dimensional approximation}\label{secfinal}
In Section 2.1 we outline how to discretize (\ref{sys}) (and as a consequence (\ref{sys-z})) using a spectral method \cite{trefethenspectral}, resulting in a system described by ordinary differential equations. For sake of conciseness, the derivation is slightly different from \cite{breda}, in the sense that the connection of (\ref{sys}) with an abstract infinite-dimensional linear system is not explicitly made. Subsequently, we outline how an approximation of the delay Lyapunov matrix can be obtained from this discretization.  In Section 2.2 we discuss and illustrate properties of the obtained approximations.  In Section~\ref{sparse} we refermulate the 
expressions for the delay Lyapunov approximations in a form that is more suitable for the application of a Krylov method.

\subsection{A spectral discretization}\label{parspect}

 Given a positive integer
$N$, we consider a mesh $\Omega_N$ of $N+1$ distinct
points in the interval $[-\tau_{m},\ 0]$:
\begin{equation}\label{defmesh2}
\Omega_N=\left\{\theta_{N,i},\ i=1,\ldots,N+1\right\},
\end{equation}
where
\[
-\tau_{m}\leq\theta_{N,1}<\ldots<
\theta_{N,N}<\theta_{N,N+1}=0.
\]
Throughout the paper we choose the nonzero mesh points as scaled and shifted zeros of  the Chebyshev polynomial of the second kind and order $N$, i.e. the mesh points are specified as
\begin{equation}\label{defgrid}
\theta_{N,i}=\frac{\tau_m}{2}(\alpha_{N,i}-1),\ \   \alpha_{N,i}=-\cos\frac{\pi i}{N+1},\ i=1,\ldots,N+1.
\end{equation}

Denoting with $l_{N,k}$ the Lagrange polynomials corresponding to $\Omega_N$, i.e., real valued
polynomials of degree $N$ satisfying
\[
l_{N,k}(\theta_{N,i})=\left\{\begin{array}{ll}1 & i=k,\\
0 & i\neq k,
\end{array}\right.
\]
and letting $x_k,\ k=1,\ldots,N+1$ functions from $\RR$ to $\RR^n$, we approximate the ``piece of trajectory" $x(t+\theta),\ \theta\in [-\tau_m,\ 0]$ as follows,
\begin{equation}\label{rightapprox}
x(t+\theta)\approx \sum_{k=1}^{N+1} l_{N,k}(\theta) x_k(t),\ \ \theta\in[-\tau_m,\ 0],
\end{equation}
which induces on its term the approximation
\begin{equation}\label{connections}
\left\{\begin{array}{rcl}
x_1(t) &\approx & x(t+\theta_{N,1}),\\
&\vdots & \\
x_{N}(t)&\approx &x(t+\theta_{N,N}),\\
x_{N+1}(t)& \approx  &x(t).
\end{array}\right.
\end{equation}
Along a solution of (\ref{sys-z}), $x$ is differentiable almost everywhere, hence for almost all $t\geq 0,\ \theta\in[-\tau_m,\ 0]$ we can express
\[
\frac{\partial x(t+\theta)}{\partial t}=\frac{\partial x(t+\theta)}{\partial \theta}.
\] 
Requiring that the right-hand side of (\ref{rightapprox}) satisfies this identity for (collocation points) $\theta_{N,1},\ldots,\theta_{N,N}$ brings us to the equations
\begin{equation}\label{constr1}
\dot x_i(t)= \sum_{k=1}^{N+1} \dot l_{N,k}(\theta_i) x_k(t),\ \ \  i=1,\ldots,N.
\end{equation}
Next, substituting the right-hand side of (\ref{rightapprox}) into (\ref{sys}) yields
\begin{equation}\label{constr2}
\left\{\begin{array}{lll}
\dot x_{N+1}(t) &=& A_0 x_{N+1}(t)+ \left(\sum_{i=1}^m  \sum_{k=1}^{N+1}  A_i l_{N,k}(-\tau_i) \right) x_k(t)+ B u(t),\\
y(t) &=& C x_{N+1}(t).
\end{array}\right.
\end{equation}

Letting $z(t)=[x_1^T(t)\ \cdots\ x_{N+1}^T(t)]^T\in\RR^{(N+1)n\times 1}$,  Equations (\ref{constr1}) and (\ref{constr2}) can be written  as 
\begin{equation}\label{approx}
\left\{\begin{array}{l}
\dot z(t)= \mathcal{A}_N z(t)+B_N u(t), \\
y(t)= C_N z(t),
\end{array}\right.
\end{equation}
where
\begin{equation}\label{defAN}
\begin{array}{l}
\mathcal{A}_N=
\left[\begin{array}{lll}
d_{1,1} &\hdots & d_{1,N+1} \\
\vdots & & \vdots \\
d_{N,1} &\hdots & d_{N,N+1} \\
a_{1} & \hdots & a_{N+1}\\
\end{array}\right],
\ \ 
B_N=\left[\begin{array}{c}0\\ \vdots\\ 0 \\1  \end{array}\right]\otimes B,
\\
 C_N=[0\ \cdots\ 0\ 1]\otimes C
\end{array}
\end{equation}
and
\[
\left\{\begin{array}{llll}
d_{i,k}&=&\dot l_{N,k}(\theta_{N,i}) I_n,\ \ \ \
& i\in\{1,\ldots,N\},\ k\in\{1,\ldots,N+1\}, \\
a_{k}&=&A_0 l_{N,k}(0)+\sum_{i=1}^m A_i
l_{N,k}(-\tau_i),\ \ \ &  k\in\{1,\ldots,N+1\}.
\end{array}\right.
\]
The advantage of approximation (\ref{approx}) is that it is in the form of a standard state space representation, for which many analysis and control design techniques exist. We refer to \cite{jorisijc} where (\ref{approx}) is at the basis of a design method for fixed-order $\mathcal{H}_2$ optimal controller.

According to (\ref{connections}), it is natural to relate the initial condition in the definition of the fundamental solution $K$, see (\ref{defK}), with initial condition  $z(0)=E_N$ of (\ref{approx}), where
\[
E_N=[0\ \cdots\ 0\ 1]^T \kron I_n.
\]
This  allows us to approximate the fundamental matrix $K(t)$ by $K_N(t)$, defined as
\begin{equation}\label{defKN}
K_N(t)=E_N^T e^{\mathcal{A}_N t} E_N,
\end{equation}
which by (\ref{defU}) leads us on its turn to an approximation $\mathcal{P}_N$ of $P$, 
\begin{equation}\label{defUN}
\begin{array}{lll}
	\mathcal{P}_N(t)
	&= &
	\int_{0}^{\infty} K_N(s) B B^T K(s+t) ds
	\\
	&=&\int_{0}^{\infty} E_N^T e^{\mathcal{A}_N s} B_N B_N^T e^{\mathcal{A}_N^T(s+t)}E_N ds.
	\end{array}
\end{equation}
Similarly, we can approximate $\Upsilon$ in (\ref{defUpsilon}) by the transfer function of (\ref{approx}), given by
\begin{equation}\label{defUpsilonN}
\Upsilon_N(\slambda)=C_N\left(\slambda I-\mathcal{A}_N\right)^{-1}B_N.
\end{equation}
The following proposition provides a computational expression for $\mathcal{P}_N$ in terms of a Lyapunov matrix equation. The arguments in the proof are well known but we include them to make the paper self contained.
\begin{proposition} If matrix $\mathcal{A}_N$ is Hurwitz, we can express
\begin{equation}\label{exprUN}
\mathcal{P}_N(t)=E_N^T P_N e^{\mathcal{A}_N^T t} E_N,	
\end{equation}
where $P_N$ satisfies the Lyapunov equation
\begin{equation}\label{lyap-intro}
\mathcal{A}_N P_N+P_N \mathcal{A}_N^T+B_N B_N^T=0.
\end{equation}
\end{proposition}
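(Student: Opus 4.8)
The plan is to exhibit $P_N$ explicitly as a Gramian-type integral, to verify by a direct differentiation argument that this integral solves (\ref{lyap-intro}), and then to recognize the very same integral inside the definition (\ref{defUN}) of $\mathcal{P}_N$. Concretely, I would set
\[
P_N := \int_0^\infty e^{\mathcal{A}_N s} B_N B_N^T e^{\mathcal{A}_N^T s}\, ds .
\]
Since $\mathcal{A}_N$ is assumed Hurwitz, $\|e^{\mathcal{A}_N s}\|$ decays exponentially in $s$, so the integrand is exponentially bounded and the improper integral converges absolutely; hence $P_N$ is a well-defined (symmetric positive semidefinite) matrix.

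Next I would check that $P_N$ solves the Lyapunov equation. The key observation is the identity
\[
\frac{d}{ds}\!\left( e^{\mathcal{A}_N s} B_N B_N^T e^{\mathcal{A}_N^T s}\right)
= \mathcal{A}_N\, e^{\mathcal{A}_N s} B_N B_N^T e^{\mathcal{A}_N^T s} + e^{\mathcal{A}_N s} B_N B_N^T e^{\mathcal{A}_N^T s}\, \mathcal{A}_N^T .
\]
Integrating both sides over $[0,\infty)$, the left-hand side telescopes to $\lim_{s\to\infty} e^{\mathcal{A}_N s} B_N B_N^T e^{\mathcal{A}_N^T s} - B_N B_N^T = -B_N B_N^T$, the limit at infinity vanishing once more by the Hurwitz property. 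This yields $\mathcal{A}_N P_N + P_N \mathcal{A}_N^T = -B_N B_N^T$, that is, (\ref{lyap-intro}). Uniqueness of the solution of (\ref{lyap-intro}) is standard: because $\mathcal{A}_N$ is Hurwitz, no two of its eigenvalues sum to zero, so the Sylvester operator $X \mapsto \mathcal{A}_N X + X \mathcal{A}_N^T$ is nonsingular and $P_N$ is \emph{the} solution.

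Finally, starting from the second line of (\ref{defUN}) and pulling the constant factors $E_N^T$ on the left and $e^{\mathcal{A}_N^T t} E_N$ on the right out of the (absolutely convergent) integral,
\[
\mathcal{P}_N(t) = E_N^T\!\left(\int_0^\infty e^{\mathcal{A}_N s} B_N B_N^T e^{\mathcal{A}_N^T s}\, ds\right) e^{\mathcal{A}_N^T t} E_N = E_N^T P_N\, e^{\mathcal{A}_N^T t} E_N ,
\]
which is precisely (\ref{exprUN}). There is essentially no hard step here; the only points that deserve a word of care are the convergence of the improper integral and the vanishing of the boundary term at $s=\infty$, both immediate consequences of the Hurwitz hypothesis, together with the appeal to the classical uniqueness theory for the algebraic Lyapunov equation.
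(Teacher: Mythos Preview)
Your proof is correct and follows essentially the same approach as the paper: define the Gramian integral, verify via the derivative-of-the-exponential identity and the Hurwitz hypothesis that it solves the Lyapunov equation, invoke uniqueness, and then factor the integral out of the definition of $\mathcal{P}_N(t)$. The only difference is cosmetic ordering---the paper first extracts the integral from (\ref{defUN}) and then checks the Lyapunov equation, whereas you do these in the reverse order.
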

\begin{proof}
We can write (\ref{defUN}) as $\mathcal{P}_N(t)= E_N^T \tilde P_N e^{\mathcal{A}_N^T t} E_N$, where
\[
\tilde P_N=\int_{0}^{\infty}  e^{\mathcal{A}_N s} B_N B_N^T e^{\mathcal{A}_N^T s} ds. 
\] 	
We have
\[
\begin{array}{lll}
\mathcal{A}_N \tilde P_N+ \tilde P_N \mathcal{A}_N^T &=& \int_{0}^{\infty} \frac{d}{ds}\left(e^{\mathcal{A}_N s} B_N B_N^T e^{\mathcal{A}_N^T s}   \right)ds
\\
&=& -B_N B_N^T,
\end{array}
\]
the latter following from the Hurwitz property of $\mathcal{A}_N$. Since for the same reason the solution to Lyapunov equation (\ref{lyap-intro}) uniquely exists, we conclude $\tilde P_N=P_N$.
\end{proof}

\smallskip

The next proposition expresses that the approximation (\ref{defUN}) of the delay Lyapunov matrix, and the approximation of the transfer function, are consistent with respect to property (\ref{prop-cor2}).
\begin{proposition} \label{prop-cor2} Function (\ref{defUN}) and transfer function (\ref{defUpsilonN}) satisfy
\begin{equation}\label{approxH2}
\|\Upsilon_N\|_2^2= \mathrm{tr} \left(C \mathcal{P}_N(0) C^T\right).  
\end{equation}
\end{proposition}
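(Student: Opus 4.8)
The plan is to reduce the identity (\ref{approxH2}) to the classical Gramian characterization of the $\mathcal{H}_2$ norm of a finite-dimensional asymptotically stable LTI system, applied to the realization $(\mathcal{A}_N,B_N,C_N)$ in (\ref{approx}), and then to match the resulting expression with $\mathrm{tr}(C\mathcal{P}_N(0)C^T)$ by means of (\ref{exprUN}). Throughout one works under the assumption --- implicit in the statement, and explicit in the preceding proposition --- that $\mathcal{A}_N$ is Hurwitz, which is exactly what is needed for both sides of (\ref{approxH2}) to be finite and for $P_N$ to exist.

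First I would record the two elementary Kronecker-structure identities $B_N=E_NB$ and $C_N=CE_N^T$, which are immediate from the definitions of $B_N$, $C_N$ in (\ref{defAN}) and of $E_N$. Together with (\ref{defKN}) these show that the impulse response of (\ref{approx}) is $h_N(t)=C_Ne^{\mathcal{A}_Nt}B_N=CK_N(t)B$ for $t\ge 0$, the finite-dimensional analogue of (\ref{relKh}).

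Next, using the time-domain definition of the $\mathcal{H}_2$ norm (valid since $h_N\in L^2$ by the Hurwitz property) together with the cyclic invariance of the trace, I would write
\[
\|\Upsilon_N\|_2^2=\int_0^\infty \mathrm{tr}\big(h_N^T(t)h_N(t)\big)\,dt=\mathrm{tr}\left(C_N\left(\int_0^\infty e^{\mathcal{A}_Nt}B_NB_N^Te^{\mathcal{A}_N^Tt}\,dt\right)C_N^T\right).
\]
The inner integral is the controllability Gramian of $(\mathcal{A}_N,B_N)$, which is precisely the matrix $\tilde P_N=P_N$ identified in the proof of the preceding proposition; hence $\|\Upsilon_N\|_2^2=\mathrm{tr}(C_NP_NC_N^T)$.

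Finally I would set $t=0$ in (\ref{exprUN}) to obtain $\mathcal{P}_N(0)=E_N^TP_NE_N$, so that $C\mathcal{P}_N(0)C^T=(CE_N^T)P_N(E_NC^T)=C_NP_NC_N^T$ by the identities of the first step; combined with the previous display this gives (\ref{approxH2}). There is no genuine obstacle here: the statement is simply the finite-dimensional shadow of Proposition~1 (formula (\ref{prop-cor1})), and every ingredient --- the Gramian formula for the $\mathcal{H}_2$ norm, the solution $P_N$ of the Lyapunov equation, and the representation (\ref{exprUN}) --- is already at hand. The only thing demanding a little care is the bookkeeping with the Kronecker blocks to confirm $B_N=E_NB$ and $C_N=CE_N^T$ exactly, and checking that the trace/integral interchange is legitimate, both of which are routine.
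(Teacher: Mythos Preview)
Your proof is correct and follows precisely the natural argument the paper leaves implicit: the proposition is stated without proof in the paper, being the finite-dimensional analogue of Proposition~1 (whose proof the paper already reduces to the impulse-response identity $h(t)=CK(t)B$), and your derivation via the controllability Gramian $P_N$ together with the Kronecker bookkeeping $B_N=E_NB$, $C_N=CE_N^T$ and the evaluation $\mathcal{P}_N(0)=E_N^TP_NE_N$ from (\ref{exprUN}) is exactly the intended route.
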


\subsection{Properties}\label{parprop}

We discuss properties of approximations (\ref{approx}) and (\ref{defUN}) which are instrumental to the developments in the next sections, and which further shed a light on the difficulty of the problem of computing the delay Lyapunov matrix. They are illustrated by means of the didactic example
\begin{equation}\label{didactic}
\dot x(t)=\frac{1}{2} x(t)-x(t-1)+u(t),\ y(t)=x(t).
\end{equation}

\paragraph{Time domain}
The function $t\mapsto K(t)$ is in general not analytic on $(0,\ \infty)$, due to the propagation of the discontinuity at $t=0$. If function $K$ has a discontinuity in its $k$-th derivative ($k=0$ for a discontinuity in the function) at some time $\hat t\geq 0$, then  the function has, in the generic case, a discontinuity in its $(k+1)$-th derivative at time instants $\hat t+\tau_i,\ i=1,\ldots,m$.  The increase of regularity is called the smoothing property of solutions \cite{hale}.

Via definition (\ref{defU}) the non-smoothness of $K$ propagates to the function $t\geq 0\mapsto P(t)$ (we restrict to non-negative $t$ because of the so-called symmetry property $P(-t)=P(t)^T$). In Section 4 of~\cite{jarlebringtac}~it has been shown that the function $P$ is in general not infinitely many times differentiable for $t\in S,$ where
\[
S=\left\{\vec\tau\cdot\vec z:\ \vec z\in\mathbb{Z}^m,\ \vec\tau\cdot\vec z>0  \right\},
\] 
where $\vec\tau=(\tau_1,\ldots,\tau_m)$ and $\vec z=(z_1,\ldots,z_m)$.  In the commensurate delay case, where $\vec\tau=h \vec n$ with $n\in\NN^m$ and $\gcd(\vec n)=1$, we have $S=\left\{k h:\ k=0,1,2,\ldots\right\}$. In case of non-commensurate delays, set $S$ is dense in $[0,\infty)$. In both cases,  function $P$ is continuous, $\dot P $ are continuous on $(0, \infty)$, while  $\ddot P$ is continuous for all $t\in (0,\infty)$ except for $t=\tau_i,\ i\in\{1,\ldots,m\}$, but still of bounded variation. For more details we refer to \cite{jarlebringtac}.  As an illustration we plot the functions $K$ and $P$, corresponding to (\ref{didactic}), in Figure~\ref{figK}.

\begin{figure}[h]
\resizebox{!}{4.7cm}{\includegraphics{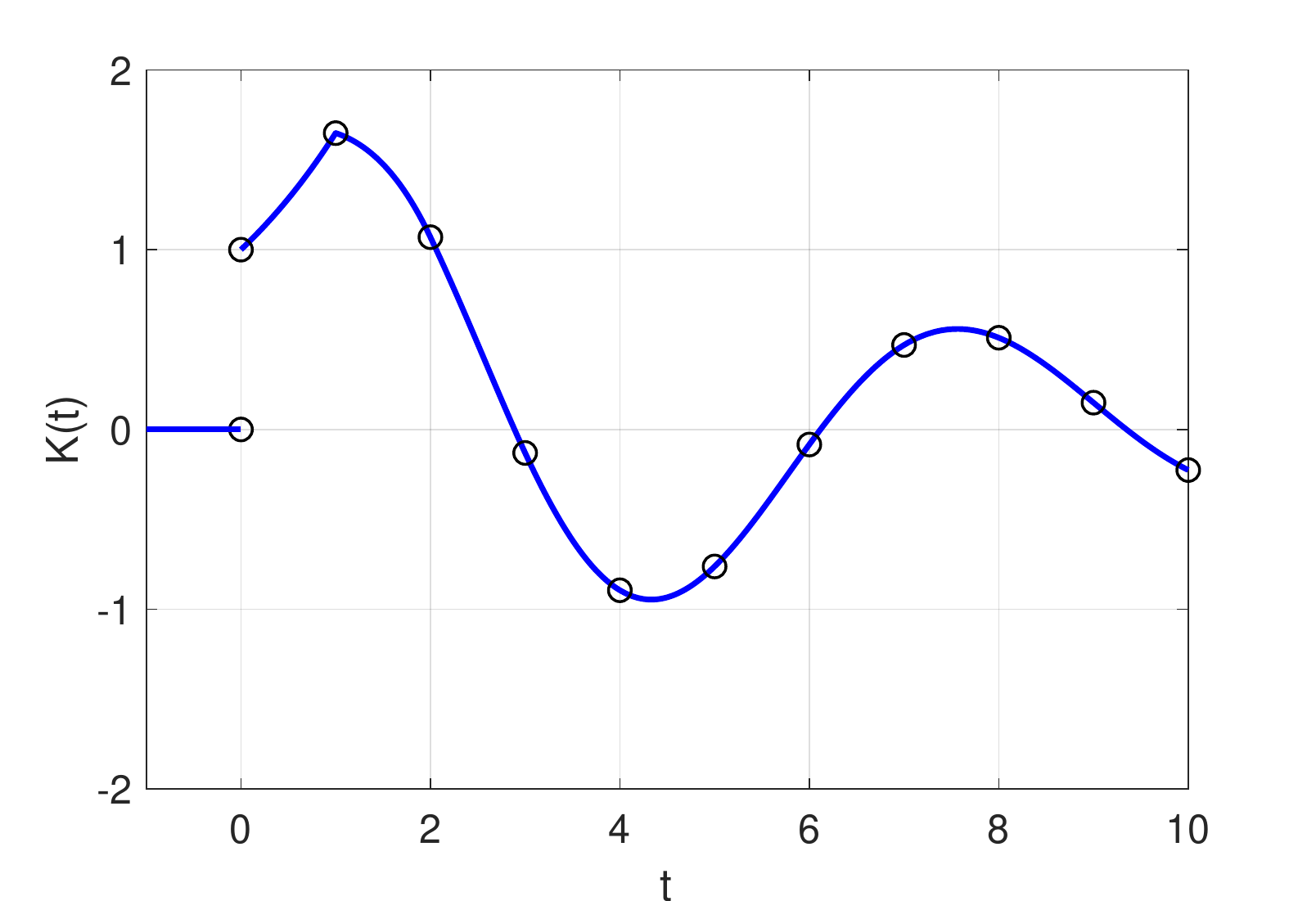}}
\resizebox{!}{4.7cm}{\includegraphics{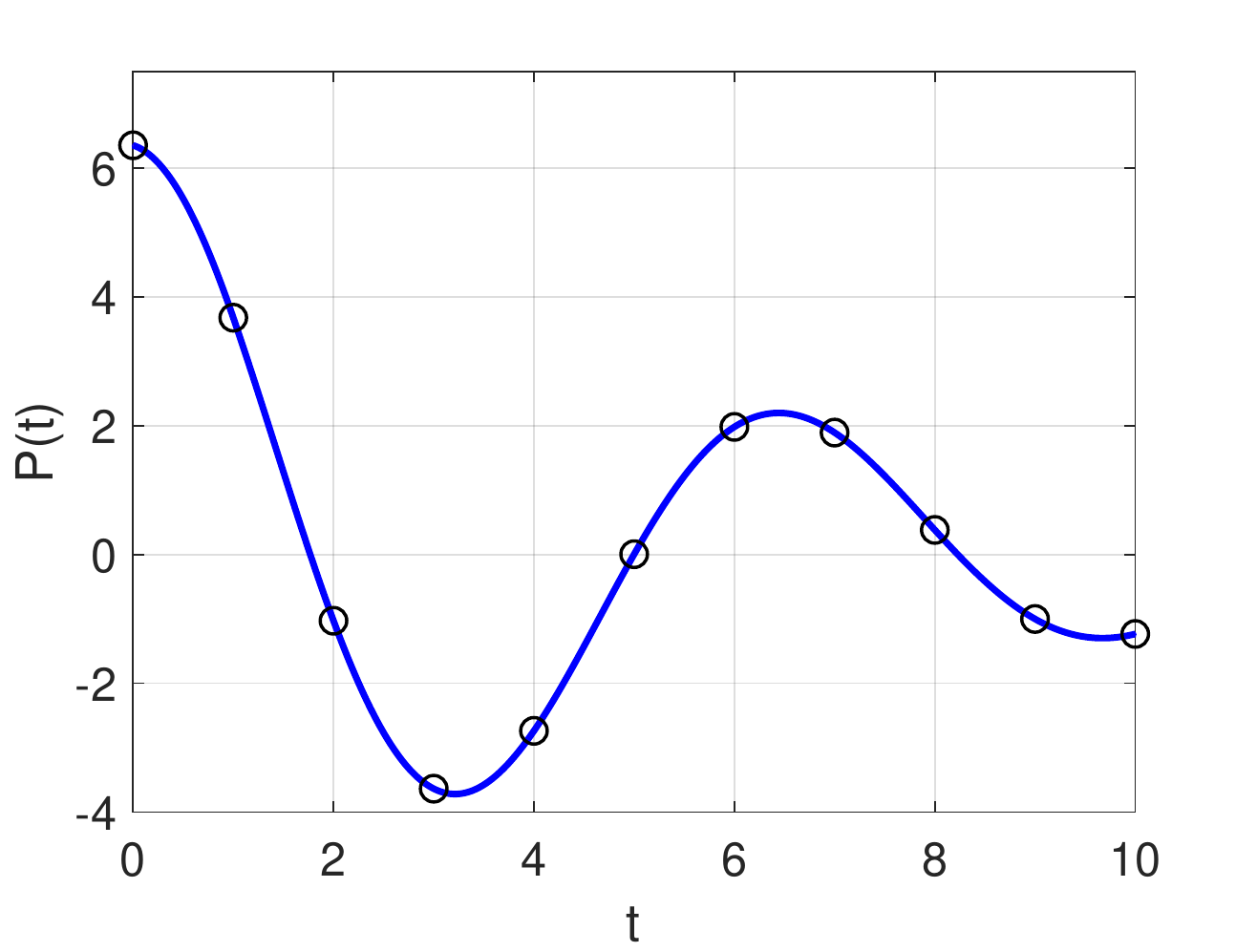}}
\caption{\label{figK} Plot of functions $K$ and $P$ for system (\ref{didactic}). The circles correspond to time-instants where the function is not infinitely many times differentiable. Function $K$ (function $P$) exhibits a discontinuity in its $k$-th derivative ($(k+1)$-th derivative) at $t=k$, for all $k\in\mathbb{N}$. }
\end{figure}

In Figure~\ref{figconv2} we plot for system (\ref{didactic}) the normalized errors
\begin{equation}\label{figer1}
\frac{\max_{t\in[0,\ t_{\max}]} |P(t)-\mathcal{P}_N(t)|}{\max_{t\in[0,\ t_{\max}]} |P(t)|} 
\end{equation}
for $t_{\max}=2$ and
\begin{equation}\label{figer2}
\frac{|P(0)-\mathcal{P}_N(0)|}{|P(0)|}, 
\end{equation}
as a function of $N$.  Note that, as $B=C=1$ for system (\ref{didactic}), expression (\ref{figer2}) corresponds to the normalized error on the squared $\mathcal{H}_2$ norm if the latter is approximated by $\|\Upsilon_N\|_2^2$, see (\ref{approxH2}). We observe the following rates of convergence: $\mathcal{O}\left(N^{-2}\right)$ for (\ref{figer1}), and $\mathcal{O}\left(N^{-3}\right)$ for (\ref{figer2}).  In all other experiments we observed the same rates of convergence.

The seemingly slow convergence, $\mathcal{O}\left(N^{-2}\right)$ for the maximum error of $P$ on a compact interval, is expected in view of the smoothness properties of function $P$. As we have seen, $P$ has discontinuities in its second derivative at $t=\tau_i,\ i=1,\ldots,m$ (with $\ddot P$  of bounded variation), while function $\mathcal{P}_N$, defined by (\ref{defUN}), is analytic on $\mathbb{R}$. Thus, we are approximating a non-smooth function by a series of smooth functions.  Note that we would  obtain the same rate of convergence when approximating $P$ on an interval by a series of polynomials interpolating in a Chebyshev mesh \cite[Theorem~7.2]{atap}.  As $P$ is analytic in the interval $(0,\tau_1)$ and we only consider nonnegative $t$, the convergence rate is better at $t=0$. We refer to \cite{jorisijc}, where an extensive argumentation for the rate $\mathcal{O}(N^{-3})$ for the $\mathcal{H}_2$ norm approximation induced by $\|\Upsilon_N\|_2$ is given.
We recall that the lack of smoothness of $P$ also affects solution schemes based on solving the boundary value problem (\ref{BVP}) directly \cite{jarlebringtac}.

\begin{figure}
	\begin{center}
	\resizebox{!}{6cm}{\includegraphics{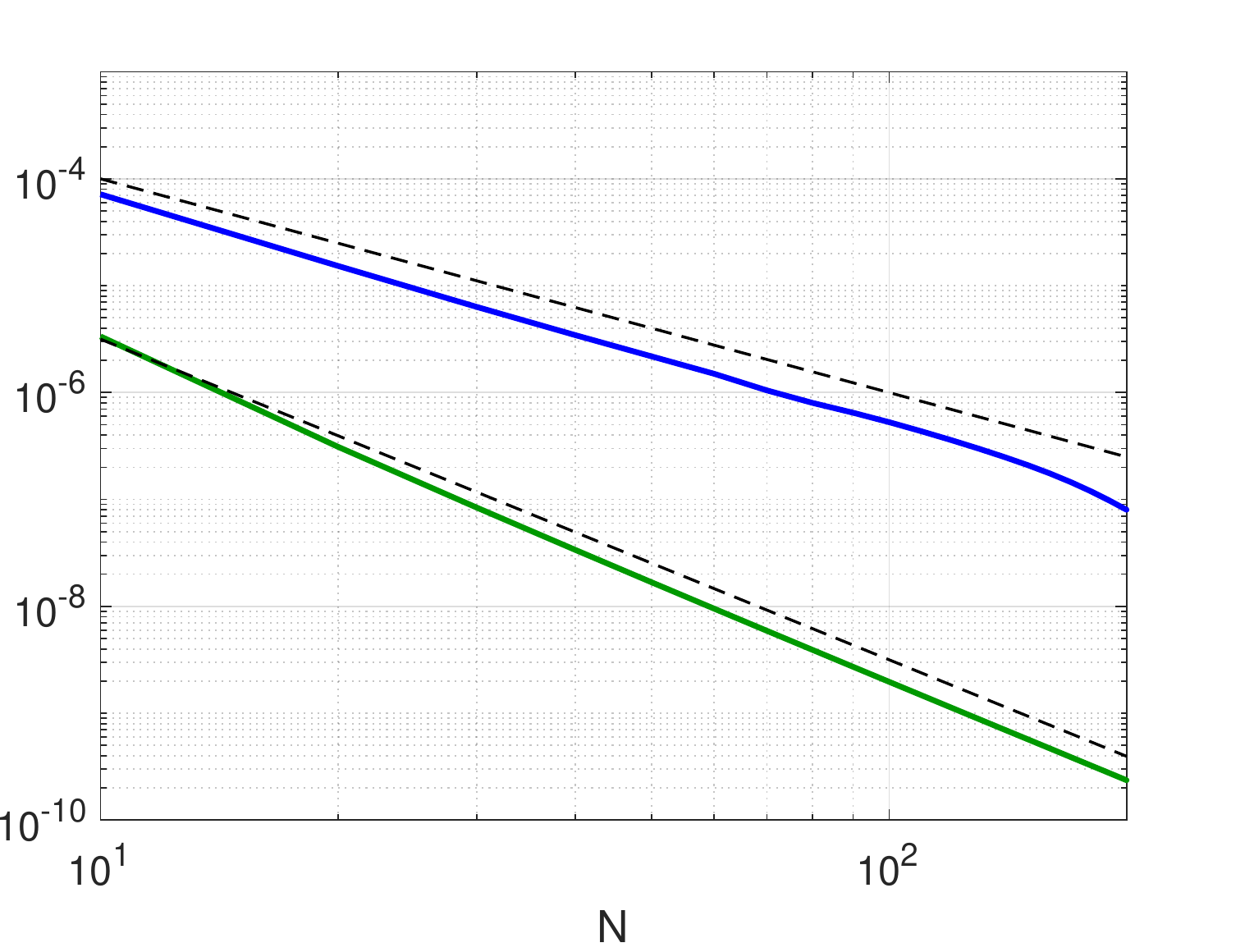}}
	\end{center}
	\caption{\label{figconv2} Normalized error (\ref{figer1}) (blue curve) and (\ref{figer2}) (green curve) as a function of $N$ for system (\ref{didactic}). The dashed lines indicate the rates $\mathcal{O}\left(N^{-2}\right)$ and $\mathcal{O}\left(N^{-3}\right)$. }
\end{figure}

\paragraph{Frequency domain}
With the choice of the Chebyshev mesh (\ref{defgrid}) the asymptotic convergence of the individual eigenvalues of $\mathcal{A}_N$ to corresponding characteristic roots
is fast. More specifically, in~\cite{breda} it is proven that spectral accuracy (approximation error $O(N^{-N})$) is obtained. 
An additional property of using mesh (\ref{defgrid}) for discretizing (\ref{sys}), observed in extensive numerical experiments, is that the eigenvalues of $\mathcal{A}_N$, which have not yet converged to corresponding characteristic roots of (\ref{sys-z}), are very often located to the left of the eigenvalues that have already converged, which is important with respect to preservation of stability. These properties are illustrated for system (\ref{didactic}) in Figure~\ref{figeig}. Finally, since the effect of the spectral discretization can be interpreted in terms of a rational approximation of functions $\lambda\to\exp(-\lambda\tau_i),\ i=1,\ldots,m$ around zero, see \cite{paperwu}, convergence is almost invariably  reached first for the smallest characteristic roots in modulus if $N$ is gradually increased.  Due to the characteristic shape on the spectrum of delay equation (exhibiting  infinite root chains extending in the left half plane, along which the imaginary part grows exponentially as a function of the real part, see
\cite{vyhlidal} for a detailed description), the rightmost, stability determining roots, are typically among the smallest characteristic roots.

\begin{figure}
	\resizebox{!}{4.9cm}{\includegraphics{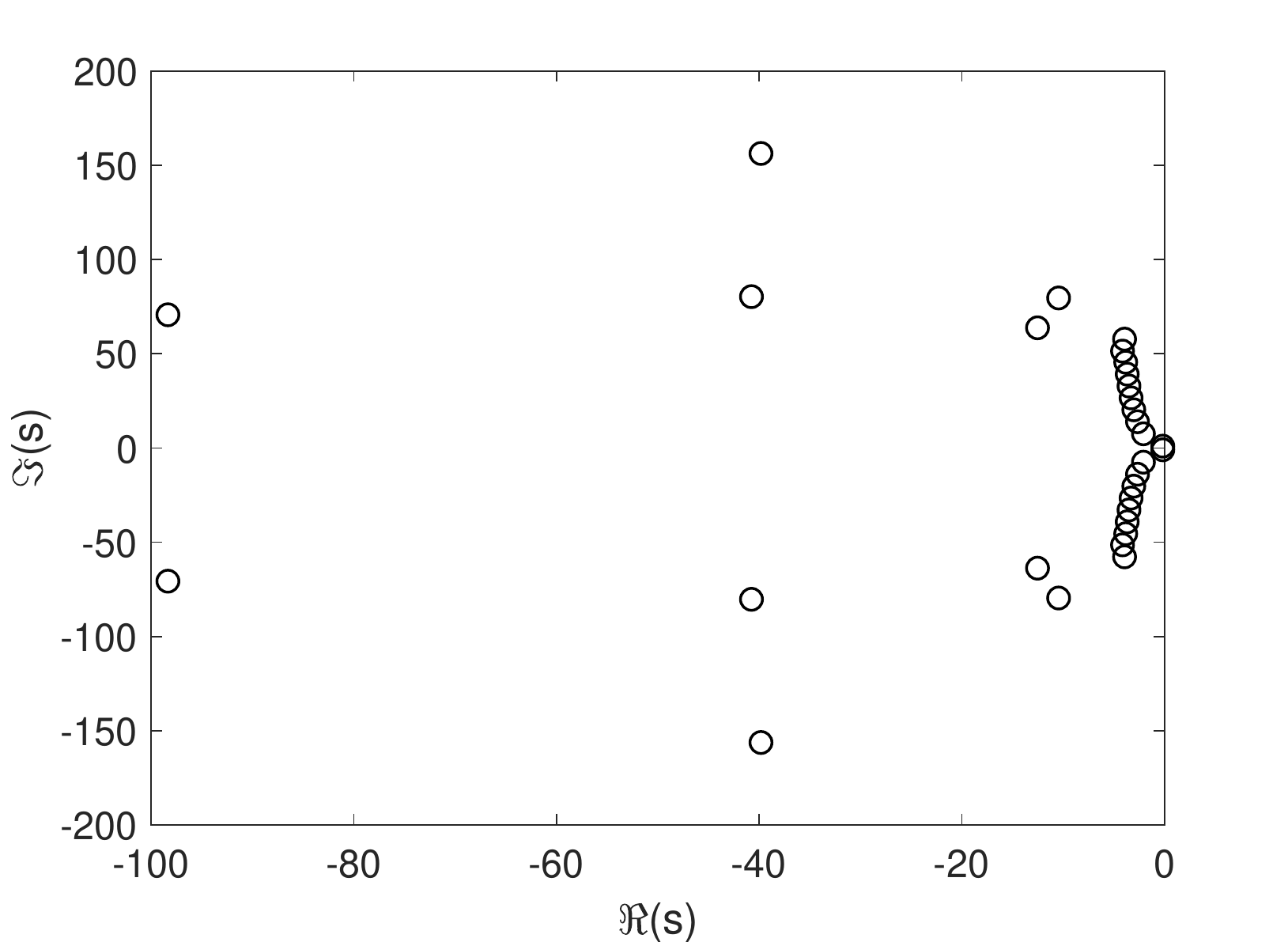}}
	\resizebox{!}{4.9cm}{\includegraphics{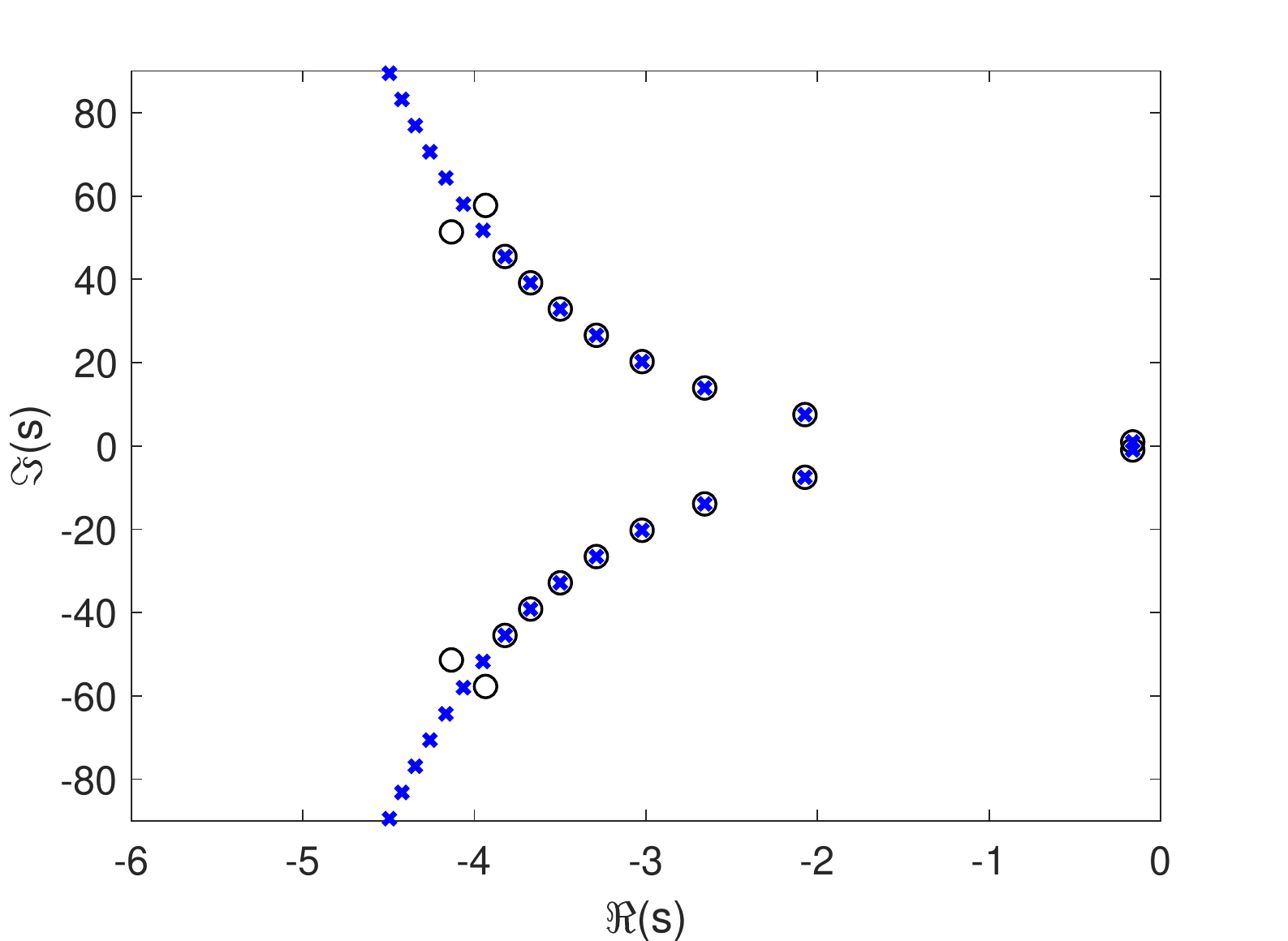}}
	\caption{\label{figeig}  (left) all eigenvalues of $\mathcal{A}_N$, corresponding to system (\ref{didactic}), for $N=30$ (black circles). (right)  Zoom of the right-part of the spectrum of $\mathcal{A}_N$, supplemented with the characteristic roots of the delay equation (blue stars). Its null solution is exponentially stable, with rightmost characteristic roots $-0.1629 \pm 0.9725j$. }
\end{figure}

With respect to the approximation of the transfer function,  the following moment matching property is proven in \cite{wimmoment}, which is in fact independent of the choice of the mesh points in (\ref{defmesh2}).
\begin{proposition}\label{theoremmoment}
	The transfer functions (\ref{defUpsilon}) and (\ref{defUpsilonN}) satisfy,
	\begin{equation}\label{momentprop1}
	\left.\frac{d^i \Upsilon_N(\slambda)}{d\slambda^i}\right|_{\slambda=0}
	=\left.\frac{d^i \Upsilon(\slambda)}{d\slambda^i}\right|_{\slambda=0},\ \ i=0,\ldots, N,
	\end{equation}
	and
	\begin{equation}\label{momenprop2}
	\left.\frac{d^i \Upsilon_N(\slambda^{-1})}{d\slambda^i}\right|_{\slambda=0}
	=\left.\frac{d^i \Upsilon(\slambda^{-1})}{d\slambda^i}\right|_{\slambda=0},\ \ i=0,1,
	\end{equation}
	that is, the moments of $\Upsilon(\slambda)$ and $\Upsilon_N(\slambda)$ at zero match up to the $N$th moment, and the moments at infinity match up to the first moment.
\end{proposition}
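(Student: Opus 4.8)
The plan is to factor out $C$ (resp.\ $C_N$) and reduce the claim to a statement about the bottom block of the resolvent of $\mathcal{A}_N$. Write $\Delta(\slambda)=\slambda I-A_0-\sum_{i=1}^m A_i e^{-\slambda\tau_i}$, so that $\Upsilon(\slambda)=Cg(\slambda)$ with $g(\slambda)=\Delta(\slambda)^{-1}B$, and $\Upsilon_N(\slambda)=Cv_{N+1}(\slambda)$, where $[v_1(\slambda)^T\ \cdots\ v_{N+1}(\slambda)^T]^T=(\slambda I-\mathcal{A}_N)^{-1}B_N$. First I would attach to $v(\slambda)$ the unique $\RR^n$-valued polynomial $q(\cdot\,;\slambda)$ of degree $\le N$ with $q(\theta_{N,k};\slambda)=v_k(\slambda)$; because Lagrange interpolation at the $N+1$ nodes is exact on polynomials of degree $\le N$ (so $\sum_k l_{N,k}(-\tau_i)v_k=q(-\tau_i;\slambda)$ and $\sum_k \dot l_{N,k}(\theta_{N,i})v_k=q'(\theta_{N,i};\slambda)$), the definition of $\mathcal{A}_N$ in (\ref{defAN}) turns $(\slambda I-\mathcal{A}_N)v=B_N$ into the collocation conditions $q'(\theta_{N,i};\slambda)=\slambda q(\theta_{N,i};\slambda)$, $i=1,\dots,N$, together with $\slambda q(0;\slambda)=A_0 q(0;\slambda)+\sum_{i=1}^m A_i q(-\tau_i;\slambda)+B$. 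Comparing degrees, the $N$ collocation conditions are equivalent to $q'(\theta;\slambda)-\slambda q(\theta;\slambda)=r(\slambda)\,\omega_N(\theta)$ with $\omega_N(\theta)=\prod_{i=1}^N(\theta-\theta_{N,i})$ monic of degree $N$ and $r(\slambda)\in\RR^n$. The continuous counterpart is $\phi(\theta;\slambda)=e^{\slambda\theta}g(\slambda)$, which satisfies $\phi'-\slambda\phi=0$ and the same boundary relation $\slambda\phi(0)=A_0\phi(0)+\sum_i A_i\phi(-\tau_i)+B$; thus the discretization is precisely the polynomial collocation of this problem, with the entire defect concentrated in the single term $r(\slambda)\omega_N$.

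Next I would expand everything in powers of $\slambda$: $q=\sum_{j\ge0}\slambda^j q_j$ with $\deg q_j\le N$, $r=\sum_j\slambda^j r_j$, $g=\sum_j\slambda^j g_j$, and $\phi=\sum_j\slambda^j\phi_j$ with $\phi_j(\theta)=\sum_{l=0}^j\frac{\theta^{j-l}}{(j-l)!}g_l$ (degree $j$), and establish by induction on $j=0,1,\dots,N$ that $q_j=\phi_j$ and $r_j=0$. Indeed, the $\slambda^j$-coefficient of $q'-\slambda q=r\omega_N$ is $q_j'-q_{j-1}=r_j\omega_N$ (with $q_{-1}:=0$); by the induction hypothesis $\deg q_{j-1}=\deg\phi_{j-1}\le j-1\le N-1$ and $\deg q_j'\le N-1$, so the left-hand side has degree $<N=\deg\omega_N$, forcing $r_j=0$ and $q_j'=q_{j-1}=\phi_j'$, hence $q_j=\phi_j+\delta_j$ for a constant $\delta_j\in\RR^n$. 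The $\slambda^j$-coefficient of the boundary relation reads, for $j=0$, $0=M_0 q_0(0)+B$ with $M_0:=A_0+\sum_i A_i=-\Delta(0)$ invertible (exponential stability $\Rightarrow$ $0$ is not a characteristic root), so $q_0(0)=-M_0^{-1}B=g_0$; and for $j\ge1$, $q_{j-1}(0)=A_0 q_j(0)+\sum_i A_i q_j(-\tau_i)$. Substituting $q_j=\phi_j+\delta_j$ and using that $\phi$ obeys the identical relation and $q_{j-1}=\phi_{j-1}$, this collapses to $M_0\delta_j=0$, i.e.\ $\delta_j=0$. Hence $v_{N+1}(\slambda)=q(0;\slambda)$ has Taylor coefficients $q_j(0)=\phi_j(0)=g_j$ for $j=0,\dots,N$, and multiplying by $C$ gives (\ref{momentprop1}). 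No feature of the Chebyshev mesh beyond $\theta_{N,N+1}=0$ and distinctness of the nodes enters, consistent with the claimed mesh-independence.

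For the moments at infinity, both $\Upsilon$ and $\Upsilon_N$ are strictly proper, so $\Upsilon(\slambda^{-1})$ and $\Upsilon_N(\slambda^{-1})$ vanish at $\slambda=0$, which settles $i=0$ in (\ref{momenprop2}). For $i=1$ their derivatives at $0$ are the first Markov parameters $\lim_{\slambda\to\infty}\slambda\Upsilon(\slambda)$ and $\lim_{\slambda\to\infty}\slambda\Upsilon_N(\slambda)$; using $\slambda^{-1}A_0\to0$, $e^{-\slambda\tau_i}\to0$ and hence $\slambda\Delta(\slambda)^{-1}\to I$ as $\slambda\to+\infty$ along $\RR$, the first equals $CB$, while from $(\slambda I-\mathcal{A}_N)^{-1}=\slambda^{-1}I+\mathcal{O}(\slambda^{-2})$ the second equals $C_N B_N$, and a direct block computation with (\ref{defAN}) gives $C_N B_N=([0\ \cdots\ 0\ 1]\otimes C)([0\ \cdots\ 0\ 1]^T\otimes B)=CB$. (The moments of $\Upsilon$ at infinity are read off the expansion $\Upsilon(\slambda)=\slambda^{-1}CB+\slambda^{-2}CA_0B+\cdots$ valid for real $\slambda\to+\infty$ modulo exponentially small terms; only the $\slambda^{-1}$ coefficient is at issue here.)

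I expect the crux to be the reformulation in the first paragraph: recognizing that $(\slambda I-\mathcal{A}_N)v=B_N$ is exactly the polynomial collocation of $\phi'=\slambda\phi$ with the stated boundary relation, with the defect packaged as $r(\slambda)\omega_N(\theta)$, and that the Lagrange-exactness identities make the delay blocks of $\mathcal{A}_N$ reproduce the exact delayed value $q(-\tau_i;\slambda)$. After that the induction is a routine degree count. The only additional care needed is well-posedness at $\slambda=0$: one should assume $0\notin\mathrm{spec}(\mathcal{A}_N)$ (e.g.\ $\mathcal{A}_N$ Hurwitz), so that $\Upsilon_N$ is analytic at $0$ and the formal power series built above is its genuine Taylor expansion.
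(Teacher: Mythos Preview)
The paper does not include its own proof of this proposition; it is stated as a citation, ``proven in \cite{wimmoment}''. So there is no in-paper argument to compare against. That said, your proof is correct and self-contained, and it nicely matches the paper's remark that the result ``is in fact independent of the choice of the mesh points in (\ref{defmesh2})'': indeed, your argument only uses that the $N+1$ nodes are distinct and that $\theta_{N,N+1}=0$.

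A couple of small comments. First, the invertibility you need at $s=0$ is automatic under the paper's setup: exponential stability of (\ref{sys-z}) gives $\Delta(0)=-(A_0+\sum_i A_i)=-M_0$ invertible, and from the similarity $\mathcal{A}_N\sim G_N^{-1}=\Pi_N^{-1}\Sigma_N$ in (\ref{relAnPn2}) with $\Sigma_N$ block upper triangular having $R_0=M_0$ in the top-left corner, $\mathcal{A}_N$ is invertible precisely when $M_0$ is; so no extra hypothesis beyond the standing stability assumption is required. Second, for the moments at infinity your real-axis limit is the right interpretation here, since (as the paper also notes just after the proposition) $s=\infty$ is an essential singularity of $\Upsilon$ and only the first two Markov parameters are meaningful; the identity $C_NB_N=CB$ from the Kronecker structure is exactly what is needed.

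Your key observation---that the resolvent equation $(sI-\mathcal{A}_N)v=B_N$ is, via the interpolating polynomial $q(\cdot\,;s)$, the polynomial collocation of $\phi'=s\phi$ with the delayed boundary relation, and that the defect factors as $r(s)\omega_N(\theta)$---is the clean way to do this directly from (\ref{defAN}); the degree-count induction is then routine. The cited reference \cite{wimmoment} develops the moment matching through the alternative representation $\Upsilon_N(s)=CF_N(sG_N-I)^{-1}H_N$ in the Chebyshev basis (cf.\ Theorem~\ref{theosparse}), so your Lagrange-basis/collocation route is a genuinely different and arguably more transparent derivation.
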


By Property (\ref{momentprop1}), which corresponds to Hermite interpolation at $s=0$, the region in the complex plane where the approximation is accurate extends from the origin as $N$ is increased, consistently with the convergence behavior of characteristic root approximations sketched in the right pane of Figure~\ref{figeig} .  At  the same time, the asymptotic delay rate of the transfer function  for $\omega\rightarrow\infty$, which is described by $CB/\omega$, is captured by property (\ref{momenprop2}). Note that higher-order moments of (\ref{defUpsilon}) at infinity  are not well defined, which is related to the property that $s=\infty$ is an \emph{essential} singularity of $\Upsilon$.  As a consequence, the overall approximation error is mainly due to a mismatch in the mid-frequency range. This is illustrated in Figure~\ref{figtransfer}, where we compare the transfer function of (\ref{didactic}) and its approximation of form (\ref{defUpsilonN}). 
\begin{figure}
	\resizebox{!}{4.8cm}{\includegraphics{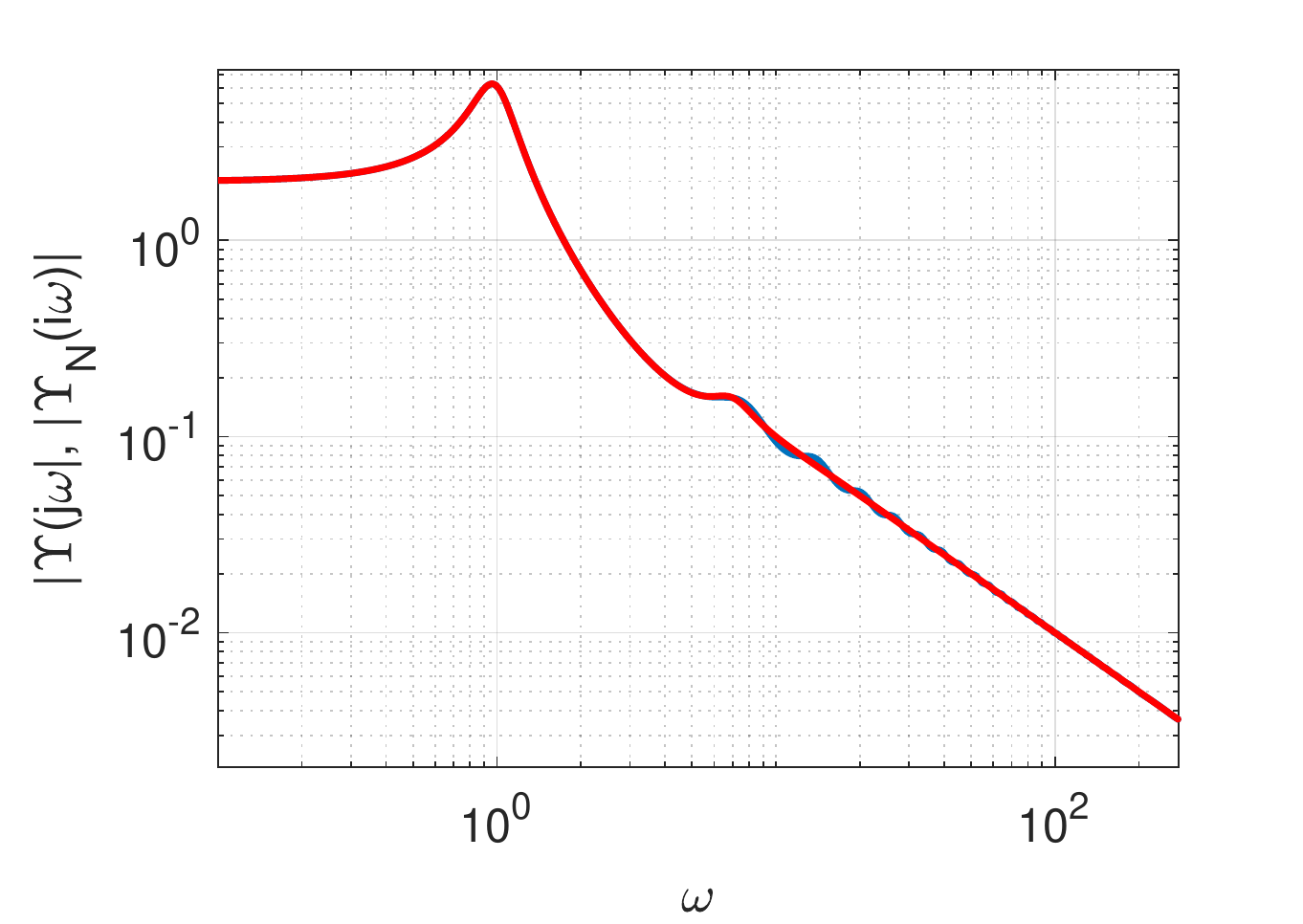}}
	\resizebox{!}{4.8cm}{\includegraphics{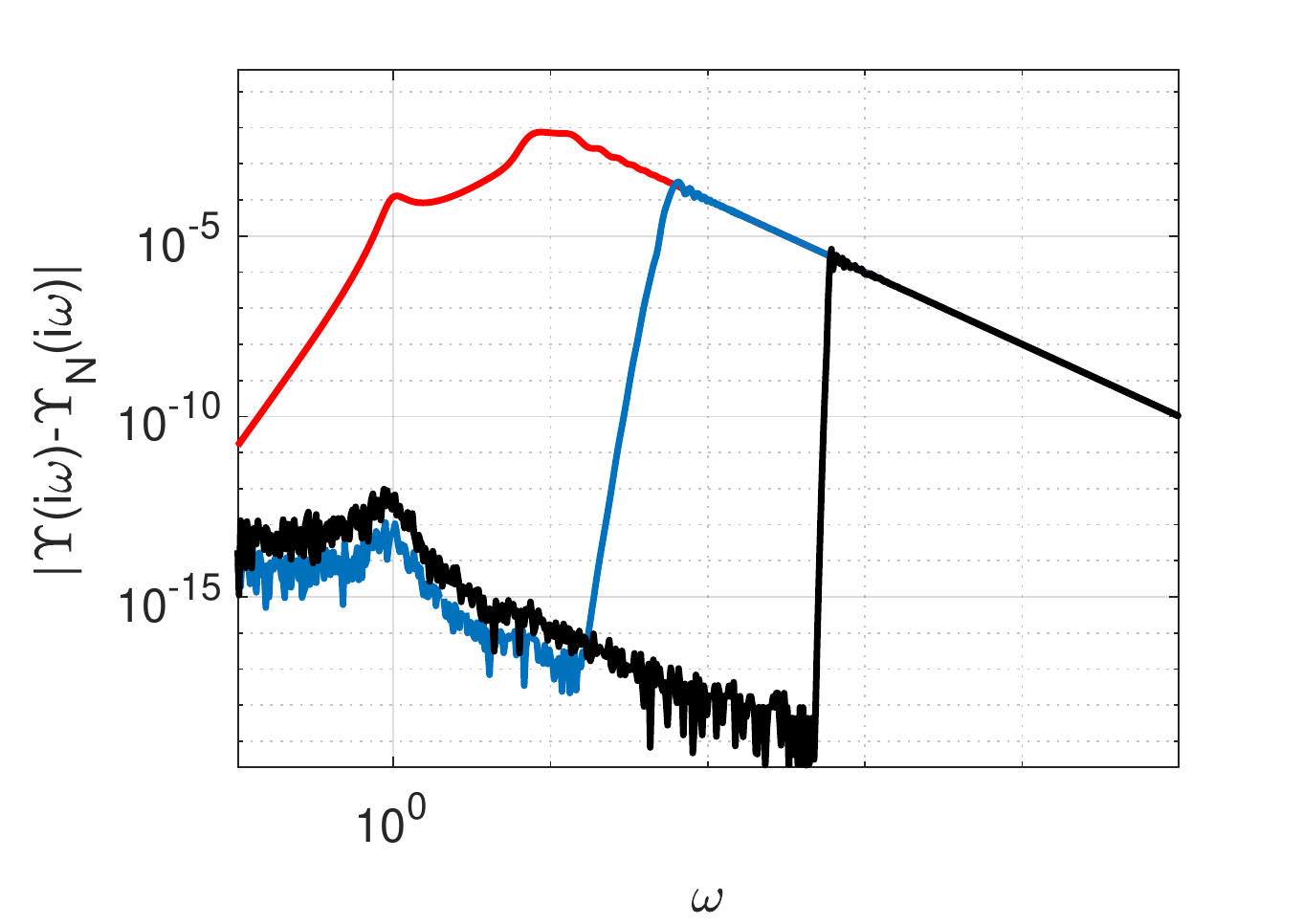}}
	\caption{\label{figtransfer} (left) Modulus of the transfer function of (\ref{didactic}) (blue curve) and the corresponding approximation (\ref{defUpsilonN}) for $N=5$ (red curve), evaluated on the imaginary axis, i.e.~for $s=\imath\omega,\ \omega\geq 0$. (right) Approximation error on the imaginary axis for $N=5$ (red curve), $N=30$ (blue curve) and $N=100$ (black curve).}
\end{figure}

The right pane in Figure~\ref{figtransfer} gives a complementary explanation, besides the smoothness properties of the function $t\mapsto P(t)$,  why the convergence of $\mathcal{P}_N(0)$ to $P(0)$ 
has exhibits a low rate of convergence $\mathcal{O}(N^{-3})$, compared to the spectral convergence of the eigenvalues of $\mathcal{A}_N$: unlike an individual pole and the $\mathcal{H}_{\infty }$ norm, the $\mathcal{H}_2$ norm is a \emph{global} characteristic of the transfer function, in the sense that an accurate computation involves approximating the transfer function well over whole the imaginary axis.

\subsection{A reformulation of the discretized problem}\label{sparse}

The following main theorem reformulates expressions (\ref{exprUN})-(\ref{lyap-intro}) in terms of a matrix $G_N$ similar to $\mathcal{A}_N^{-1}$, giving the Lyapunov equation a favorable structure that will be exploited by the algorithms presented in Section~\ref{secproject}.
\begin{theorem} \label{theosparse}
Assume that $\mathcal{A}_N$ is Hurwitz and let	
\begin{equation}\label{defGGN}
G_N=\Sigma_N^{-1}\Pi_N,
\end{equation}
where
{\small
	\begin{equation}\label{defPiN2}
	\Pi_N=\frac{\tau_{m}}{4}\left[\begin{array}{rrrrrrr}
	\frac{4}{\tau_m} &\frac{4}{\tau_m} & \frac{4}{\tau_m}&\cdots&&\cdots&\frac{4}{\tau_m}  \\
	2& 0&-1 &&&& \\
	&\frac{1}{2} &0 &-\frac{1}{2}&&&\\
	&&\frac{1}{3}&0&-\frac{1}{3}&&\\
	&&&\ddots &\ddots&\ddots&\\
	&&&        & \frac{1}{N-1}   &0& -\frac{1}{N-1}\\
	&&&            &      &\frac{1}{N} &0
	\end{array}\right]\otimes I
	\end{equation}
}
and
\begin{equation}\label{defSigmaN2}
\Sigma_N=\left[\begin{array}{cccc}
R_0 & R_1& \cdots&    R_N \\
& I_n &   & \\
&   & \ddots & \\
&   &        & I_n \\
\end{array}
\right],
\end{equation}
with
\[
R_i=A_0 T_i(1)+\sum_{k=1}^m A_k
T_i\left(-2\frac{\tau_k}{\tau_{m}}+1\right),\ i=0,\ldots,N
\]	
and $T_i$ the Chebyshev polynomial of the first kind and order $i,\ i=0,1,2,\ldots$. Moreover, let
\begin{equation}\label{defHN}
H_N=\left[\begin{array}{c}
R_0^{-1}\left(I-\frac{\tau_m}{2}R_1\right)R_0^{-1} B\\
\frac{\tau_m}{2}R_0^{-1} B \\
0\\
\vdots\\
0
\end{array}\right]
\end{equation}
and
\begin{equation}\label{defFN}
F_N=[ R_0\ R_1\ \ \cdots\  R_N].
\end{equation}
	
Then we can express $\mathcal{P}_N$ in (\ref{exprUN}) as
\begin{equation}\label{mainsparse}
\mathcal{P}_N(t)= F_N  Q_N e^{G_N^{-T}t} F_N^T
\end{equation}
where $Q_N$ satisfies the Lyapunov equation
\begin{equation}\label{lyapunov2}
 G_N Q_N+Q_N G_N^T+H_N H_N^T=0.
\end{equation}
Moreover, system (\ref{approx}) is equivalent to
\begin{equation}\label{approx-extra}
\left\{\begin{array}{l}
G_N \dot \eta(t)= \eta (t)+H_N u(t), \\
y(t)= C F_N \eta(t),
\end{array}\right.
\end{equation}
and we can express
\begin{equation}\label{transferlarge2}
\Upsilon_N(s)=C F_N(s G_N-I)^{-1} H_N.
\end{equation}
\end{theorem}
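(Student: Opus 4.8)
The strategy is to reduce the entire statement to producing a single invertible matrix $V\in\RR^{(N+1)n\times(N+1)n}$ such that
\begin{equation}\label{threeconds}
G_N=V\mathcal{A}_N^{-1}V^{-1},\qquad H_N=V\mathcal{A}_N^{-1}B_N,\qquad F_N V=E_N^{T},
\end{equation}
and then reading off all the claims from these three identities. Granting (\ref{threeconds}), multiplying the Lyapunov equation (\ref{lyap-intro}) on the left by $\mathcal{A}_N^{-1}$ and on the right by $\mathcal{A}_N^{-T}$ shows that $P_N$ also solves $\mathcal{A}_N^{-1}P_N+P_N\mathcal{A}_N^{-T}+(\mathcal{A}_N^{-1}B_N)(\mathcal{A}_N^{-1}B_N)^{T}=0$; conjugating this by $V$ and using the first two relations of (\ref{threeconds}) gives that $Q_N:=VP_NV^{T}$ solves (\ref{lyapunov2}), the solution being unique because $\mathcal{A}_N$ Hurwitz forces $\mathcal{A}_N^{-1}$, hence $G_N$, to be Hurwitz as well (a left-half-plane number has a left-half-plane reciprocal). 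Next, from $G_N^{-T}=V^{-T}\mathcal{A}_N^{T}V^{T}$ one gets $e^{G_N^{-T}t}=V^{-T}e^{\mathcal{A}_N^{T}t}V^{T}$, so substituting this and $Q_N=VP_NV^{T}$ into the right-hand side of (\ref{mainsparse}) and using $F_N V=E_N^{T}$ collapses it to $E_N^{T}P_Ne^{\mathcal{A}_N^{T}t}E_N$, which is $\mathcal{P}_N(t)$ by (\ref{exprUN}). For the transfer function, the factorization $sG_N-I=V\mathcal{A}_N^{-1}(sI-\mathcal{A}_N)V^{-1}$, together with (\ref{threeconds}) and the identity $CE_N^{T}=C_N$, turns $CF_N(sG_N-I)^{-1}H_N$ into $C_N(sI-\mathcal{A}_N)^{-1}B_N=\Upsilon_N(s)$, i.e.\ (\ref{transferlarge2}). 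Finally, setting $\eta=Vz$ gives $\dot\eta=V(\mathcal{A}_Nz+B_Nu)$, whence $G_N\dot\eta=V\mathcal{A}_N^{-1}V^{-1}\dot\eta=V\mathcal{A}_N^{-1}(\mathcal{A}_Nz+B_Nu)=Vz+V\mathcal{A}_N^{-1}B_Nu=\eta+H_Nu$, while $y=C_Nz=C_NV^{-1}\eta=CF_N\eta$ by $CE_N^{T}=C_N$ and (\ref{threeconds}); this is the asserted equivalence of (\ref{approx}) and (\ref{approx-extra}).

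It remains to exhibit $V$ and verify (\ref{threeconds}). Conceptually $V$ implements the change of representation of $\RR^n$-valued polynomial segments of degree $\le N$ on $[-\tau_m,0]$, from the nodal data $(x_1,\dots,x_{N+1})$ at the mesh $\Omega_N$ — the coordinates in which $\mathcal{A}_N$ is spectral differentiation with the delay equation glued in as the last block row — to the representation in which the \emph{inverse} of the differentiation operator becomes the arrow-headed, tridiagonally banded matrix $\Sigma_N^{-1}\Pi_N$. The candidate $V$ is obtained by re-deriving the discretization after integrating once: with $p(t,\theta)=\sum_k l_{N,k}(\theta)x_k(t)$, the collocation rows (\ref{constr1}) say that $\partial_t p-\partial_\theta p$ vanishes at $\theta_{N,1},\dots,\theta_{N,N}$; since this is a polynomial in $\theta$ of degree $\le N$ and, by (\ref{defgrid}), those $N$ nodes are exactly the zeros of $\theta\mapsto U_N(2\theta/\tau_m+1)$, it equals $\gamma(t)\,U_N(2\theta/\tau_m+1)$ for some $\gamma(t)\in\RR^n$; integrating this relation once in $\theta$, expanding in the scaled Chebyshev basis $T_i(2\theta/\tau_m+1)$, and eliminating $\gamma(t)$ by means of the delay-equation row (\ref{constr2}) at $\theta=0$ reproduces exactly the descriptor system (\ref{approx-extra}), and the accompanying linear map $z\mapsto\eta$ is the sought $V$. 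With $V$ in hand, the third relation of (\ref{threeconds}) is immediate from the construction, and the first two reduce to Chebyshev identities: the tridiagonal pattern $\tfrac1k,0,-\tfrac1k$ in the rows of $\Pi_N$ is the Chebyshev antiderivative recurrence $\int T_i=\tfrac12\bigl(\tfrac{T_{i+1}}{i+1}-\tfrac{T_{i-1}}{i-1}\bigr)+\mathrm{const}$ (suitably scaled, with the usual modifications for $i=0,1$ producing the distinct first two rows), the all-ones first row of $\Pi_N$ paired with the first block row $F_N=[R_0\ \cdots\ R_N]$ of $\Sigma_N$ encode the delay equation at $\theta=0$, and the $R_0^{-1},R_1,\tau_m/2$ in (\ref{defHN}) are the boundary-correction terms (indeed $\mathcal{A}_N^{-1}B_N=\mathbf{1}\otimes(R_0^{-1}B)$, a constant segment, which after applying $V$ populates only the two lowest modes).

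The main obstacle is this last verification, specifically checking $G_N=V\mathcal{A}_N^{-1}V^{-1}$ to the letter: one must carry the single integration, its constant of integration, and the aliasing of $T_N'$ against $U_N$ at the Chebyshev nodes all the way through, so that the fully populated spectral differentiation matrix $\{\dot l_{N,k}(\theta_{N,i})\}$ inside $\mathcal{A}_N$ is matched exactly against the factored sparse form $\Sigma_N^{-1}\Pi_N$ — in essence, that for this collocation scheme ``discretize then invert'' coincides with ``invert then discretize'' up to the change of basis. A more pedestrian alternative, once a candidate $V$ has been identified, is to verify the first relation of (\ref{threeconds}) directly as the block-matrix identity $\Pi_N V\mathcal{A}_N=\Sigma_N V$ (using $\Sigma_N G_N=\Pi_N$) and the remaining two by inspection; this still rests on the same Chebyshev recurrences, so I would present the constructive derivation, which has the added merit of explaining where $G_N$, $H_N$ and $F_N$ come from.
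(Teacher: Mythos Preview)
Your reduction to the three identities \eqref{threeconds} is correct and is a clean way to organize the argument; once $V$ is in hand, everything you write follows exactly as stated. The paper follows essentially the same similarity-transformation route but packaged differently: it quotes from \cite{wimmoment} the relation $\mathcal{A}_N=(S_N\otimes I)\,G_N^{-1}(S_N^{-1}\otimes I)$, with $S_N$ the Chebyshev-to-Lagrange change of basis, substitutes it directly into the integral \eqref{defUN}, and then uses the commutation $e^{G_N^{-1}t}=G_N^{-1}e^{G_N^{-1}t}G_N$ together with the identities $(S_N^{-1}\otimes I)B_N=c_N\otimes B$, $E_N^T(S_N\otimes I)=\mathbf{1}_N^T\otimes I$, $(\mathbf{1}_N^T\otimes I)G_N^{-1}=F_N$, $G_N(c_N\otimes B)=\hat H_N$, $G_N\hat H_N=H_N$ to arrive at \eqref{mainsparse}--\eqref{lyapunov2}; the transfer-function claim \eqref{transferlarge2} is again cited from \cite{wimmoment}. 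So where you set up a single $V$ and read off three simultaneous identities, the paper works through the integral computation and picks up the needed factors one at a time.

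One point worth making explicit: the plain Lagrange-to-Chebyshev map $S_N^{-1}\otimes I$ does \emph{not} satisfy your three conditions --- it sends $\mathcal{A}_N^{-1}B_N=\mathbf{1}\otimes(R_0^{-1}B)$ to $\hat H_N=e_1\otimes(R_0^{-1}B)$ (one nonzero block), not to $H_N$ (two nonzero blocks), and gives $F_N(S_N^{-1}\otimes I)=E_N^T\mathcal{A}_N$ rather than $E_N^T$. The $V$ that actually works is $V=G_N(S_N^{-1}\otimes I)$; the extra factor of $G_N$ is precisely what your ``integrate once in $\theta$'' step should produce, and corresponds in the paper's computation to the commutation trick that pulls $G_N^{\pm1}$ through the exponential. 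Your integration-based construction is therefore on the right track, but when you write it up you should make the identification $V=G_N(S_N^{-1}\otimes I)$ explicit and verify the three conditions against it; otherwise the verification you flag as ``the main obstacle'' remains genuinely open.
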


\begin{proof}
In \cite[Section 3.1]{wimmoment} it has been shown that
\begin{equation}\label{relAnPn2}
	\mathcal{A}_N= (S_N\otimes I) G_N^{-1} (S_N^{-1}\otimes I),
\end{equation}
where matrix $S_N\in\mathbb{R}^{(N+1)\times (N+1)}$ maps coefficients of a polynomial of degree $N$  in the Chebyshev basis
\begin{equation}\label{chebbasis2}
	\left\{ T_i\left(2 \frac{t}{\tau_m}+1\right):\ i=0,\ldots,N\right\}
\end{equation}
onto the corresponding coefficients in the Lagrange basis,
\[
\{l_{N,i}(t):\ i=1,\ldots,N+1\},
\]
defined on the mesh (\ref{defgrid}).	
	
Substituting (\ref{relAnPn2}) into (\ref{defUN}) yields
\begin{multline}\label{derUN1}
\mathcal{P}_N(t)=\int_{0}^{\infty} E_N^T (S_N\otimes I) e^{G_N^{-1} s}  (S_N^{-1}\otimes I) B_N  
\\
B_N^T  (S_N^{-T}\otimes I)e^{G_N^{-T}(s+t)}(S_N^T\otimes I) E_Nds.
\end{multline}	
In 	the proof of Theorem 3.2 of \cite{wimmoment} it has been shown that
\begin{equation}
(S_N^{-1}\otimes I)B_N=c_N\otimes B,\ \ E_N^T (S_N\otimes I)=\mathbf{1}_N^T\otimes I,
\end{equation}
with
\[
c_N=\left\{\begin{array}{ll}
	\frac{2}{N+1}\ [0\ 1\ 0\ 1\ \cdots\ 0\ 1]^T\otimes B, & N\  \mathrm{odd}, \\
	\frac{2}{N+1}\ [\frac{1}{2}\ 0\ 1\ 0\ 1\ \cdots\ 0\ 1]^T\otimes B, & N\  \mathrm{even}, \\
	\end{array}\right.
\]
and
$\mathbf{1}_N=[1\ 1\ \cdots\ 1]^T.	
$
Using these expressions, as well as the identity $e^{G_N^{-1}t}= G_N^{-1} e^{G_N^{-1}t} G_N$, we can write (\ref{derUN1}) as
\begin{multline}\label{derUN2}
\mathcal{P}_N(t)=\int_{0}^{\infty} (\mathbf{1}_N^T\otimes I)G_N^{-1} e^{G_N^{-1} s} G_N (c_N\otimes B) 
\\
(c_N^T\otimes B^T)G_N^T e^{G_N^{-T}(s+t)} G_N^{-T}(\mathbf{1}_N\otimes I) ds.	
\end{multline}	
A straightforward computation shows that
\[
(\mathbf{1}_N^T\otimes I) G_N^{-1}=F_N,\ \ \   G_N (c_N\otimes B)=\hat H_N,
\]	
with 
\[
\hat H_N=\left[\begin{array}{c} R_0^{-1} B\\0\\ \vdots \\ 0 \end{array}\right].
\]
As a consequence, we can write 	
\begin{equation}\label{exprUN4}
\mathcal{P}_N(t)= F_N \left(\int_{0}^{\infty} e^{G_N^{-1} s} \hat H_N  \hat H_N^T e^{G_N^{-T}s}~ds\right) e^{G_N^{-T}t} F_N^T.
\end{equation}
Denoting the integral in (\ref{exprUN4}) by $\hat Q_N$, we can express the latter, relying on the assumption that $\mathcal{A}_N$ 
and $G_N^{-1}$ are Hurwitz, as the solution of the Lyapunov equation
\[
G_{N}^{-1} \hat Q_N+ \hat Q_N G_N^{-T}+ \hat H_N \hat H_N^T=0.
\]
Pre-multiplying this equations with $G_N$ and post-multiplying with $G_N^T$ yields
\[
\hat Q_N G_N^T+ G_N \hat Q_N + G_N \hat H_N \hat H_N^T G_N^T=0.
\]
Since we have $G_N\hat H_N=H_N$, it follows that $\hat Q_N=Q_N$, where $Q_N$ uniquely solves (\ref{lyapunov2}). Hence, (\ref{exprUN4}) corresponds to (\ref{mainsparse}) and (\ref{lyapunov2}).

Finally,expression (\ref{transferlarge2}) constitutes the assertion of Theorem~3.2 of \cite{wimmoment}.
\end{proof}

\medskip

Matrices $\Sigma_N$ and $\Pi_N$ have a sparse structure that can be  exploited. In what follows a key role will be played by the following property.
\begin{proposition}\label{propdyn}
	Assume that $N_1,N_2\in\NN$ with $N_1<N_2$. Then the matrices $\Sigma_{N_1},\Pi_{N_1},F_{N_1},H_{N_1}$ in Theorem~\ref{theosparse} are submatrices of
	$\Sigma_{N_2},\Pi_{N_2},F_{N_2},H_{N_2}$.
\end{proposition}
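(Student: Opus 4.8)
The plan is to verify the claimed nestedness by inspecting the explicit formulas that define the four matrices in Theorem~\ref{theosparse}, exploiting the fact that the only place the discretization parameter $N$ enters is as the truncation order of a sequence of objects that do not themselves depend on $N$. First I would observe that the Chebyshev-type matrices $\Pi_{N}$ of (\ref{defPiN2}) are built from a tridiagonal pattern whose $(i,i-1)$, $(i,i)$ and $(i,i+1)$ entries (for the rows below the first) equal $\frac{1}{i-1}$, $0$, $-\frac{1}{i-1}$ after the overall scaling, and whose first row is the constant row $[1\ 1\ \cdots\ 1]$; crucially none of these entries depends on $N$, only the size $N+1$ does. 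Hence the leading $(N_1+1)\times(N_1+1)$ block of $\Pi_{N_2}$ — in the Kronecker sense, i.e.\ the first $(N_1+1)n$ rows and columns — reproduces $\Pi_{N_1}$ exactly, provided we check one boundary subtlety: in $\Pi_{N_1}$ the last row is $[\,\cdots\ \frac{1}{N_1}\ 0\,]$ with no super-diagonal entry, whereas in $\Pi_{N_2}$ that same row has an extra $-\frac{1}{N_1}$ in column $N_1+2$, which lies outside the $(N_1+1)$-block and is therefore discarded upon taking the submatrix. So the statement for $\Pi_N$ holds with this understanding of ``submatrix''.

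Next I would treat $\Sigma_N$ from (\ref{defSigmaN2}): it is block upper-triangular, consisting of the single block row $[R_0\ R_1\ \cdots\ R_N]$ on top of an identity block $I_{Nn}$. The matrices $R_i = A_0 T_i(1) + \sum_{k=1}^m A_k T_i(-2\tau_k/\tau_m+1)$ are defined purely in terms of the system data and the fixed Chebyshev polynomials $T_i$, with no dependence on $N$ whatsoever; the same $R_i$ appears in $\Sigma_{N_1}$ and in $\Sigma_{N_2}$. Restricting $\Sigma_{N_2}$ to its first $(N_1+1)$ block-rows and block-columns yields exactly $[R_0\ \cdots\ R_{N_1}]$ on top of $I_{N_1 n}$, which is $\Sigma_{N_1}$. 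The same remark applies verbatim to $F_N = [R_0\ R_1\ \cdots\ R_N]$ of (\ref{defFN}): $F_{N_1}$ is literally the first $(N_1+1)$ block-columns of $F_{N_2}$. Finally, $H_N$ in (\ref{defHN}) has a first and second block entry depending only on $R_0$, $R_1$, $\tau_m$ and $B$, followed by zero blocks; since $N_1\ge 2$ (one should note the formula for $H_N$ as written already presupposes $N\ge 1$, the genuinely nested regime being $N\ge 2$), $H_{N_1}$ is the first $(N_1+1)$ block-entries of $H_{N_2}$, the appended entries being zeros.

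Putting these four observations together establishes the proposition. I would phrase the conclusion carefully so that ``submatrix'' is understood in the blockwise sense: $M_{N_1}$ is obtained from $M_{N_2}$ by keeping the first $(N_1+1)n$ rows (and, for the square matrices $\Sigma$ and $\Pi$, also the first $(N_1+1)n$ columns). The only genuine point requiring attention — and the one I would flag as the ``main obstacle'', though it is minor — is the treatment of the trailing off-diagonal entry in the last block-row of $\Pi_{N_1}$ versus $\Pi_{N_2}$: one must confirm that the discrepancy lives entirely outside the retained block and is therefore invisible to the submatrix operation, rather than contradicting nestedness. Everything else is a direct term-by-term comparison of the defining formulas, which do not reference $N$ except through their dimensions; no computation is needed beyond reading off entries.
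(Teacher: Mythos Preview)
Your proposal is correct. The paper itself gives no proof of this proposition, treating it as immediate from the explicit formulas (\ref{defPiN2})--(\ref{defFN}); your careful entry-by-entry verification is exactly the argument one would supply, and your flagged ``obstacle'' about the trailing super-diagonal entry of $\Pi_{N_1}$ is the only point that genuinely requires a moment's thought.
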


\section{A dynamic subspace method} \label{secproject}

The price to pay for the discretization of the delay equation and the standard state space representation (\ref{approx}), which on their turn led us to  delay Lyapunov matrix approximations  in explicit form, namely (\ref{exprUN})-(\ref{lyap-intro})  and (\ref{mainsparse})-(\ref{lyapunov2}),  is an increase of dimension from $n$ to $(N+1)n$. At the same time relatively of high value of $N$ are expected for an accurate approximation, as motivated in Section~\ref{parprop}.  

 If $Nn$ is large and  matrix $B_N B_N^T$, respectively $H_N H_N^T$, has low rank (in the sense of $r<< Nn$), computing a low-rank approximation of $P_N$, respectively $Q_N$,  may be beneficial.  In  this section we construct an approximation inferred from the projection of the Lyapunov equation on a  Krylov space of dimension $kr$. Before we present the construction in Sections~\ref{pardyn}-\ref{pardynlyap}, we use another didactic example to motivate important methodological choices regarding 
 i) the relation between parameters $N$ and $k$, ii)  the choice of the Krylov space, and iii) the system matrix / Lyapunov equation to be projected on this space.
 We discuss some implementation aspects in Section \ref{parimpl} and conclude with an interpretation in terms of projecting an infinite-dimension system linear in Section~\ref{secinfcheb}. 
 
 The main contributions are contained in Sections~\ref{pardynlyap}-\ref{secinfcheb}. The Arnoldi process of Section~\ref{pardyn}  	 and the construction of the reduced model in Sections~\ref{pardyn2} extend results presented in \cite{jarlebring-sisc,wimmoment} to the multiple-input setting.

Since the technical derivations involve many steps, we included Figure~\ref{figoverview} at the end of the section in order to keep an overview of the main steps and corresponding notations.

\subsection{Motivation of methodological choices}
We consider system
\begin{equation}\label{didactic2}
{\small
\begin{array}{lll}
\dot{x}(t) &=&
\left[\begin{array}{rrr}
-0.08  & -0.03 &   0.2\\
0.2  & -0.04  & -0.005\\
-0.06  &  0.2  & -0.07
\end{array}\right]
x(t)+
\left[\begin{array}{rrr}
-0.0471  & -0.0504  & -0.0602\\
-0.0942  & -0.1008  & -0.1204\\
0.0471   & 0.0504  &  0.0602
\end{array}\right] x(t-5) 
\\
&&+\left[\begin{array}{r}1\\1 \\1 \end{array}\right]u(t),\ \ y(t)=\left[\begin{array}{rrr}1 & 0 & 0 \end{array}\right]x(t).
\end{array}
}
\end{equation}
For $N=50, 100, 150$ and $200$ we computed matrices $P_N$ and $Q_N$, solving Lyapunov equations~(\ref{lyap-intro}) and (\ref{lyapunov2}). We display in Figure~\ref{rankcond} (above) their ordered singular values, normalized such that the leading singular value equals to one.   We also show, in the lower figure, the leading singular value of both matrices as a function of $N$.
\begin{figure}
	\begin{center}
	\resizebox{6.4cm}{!}{\includegraphics{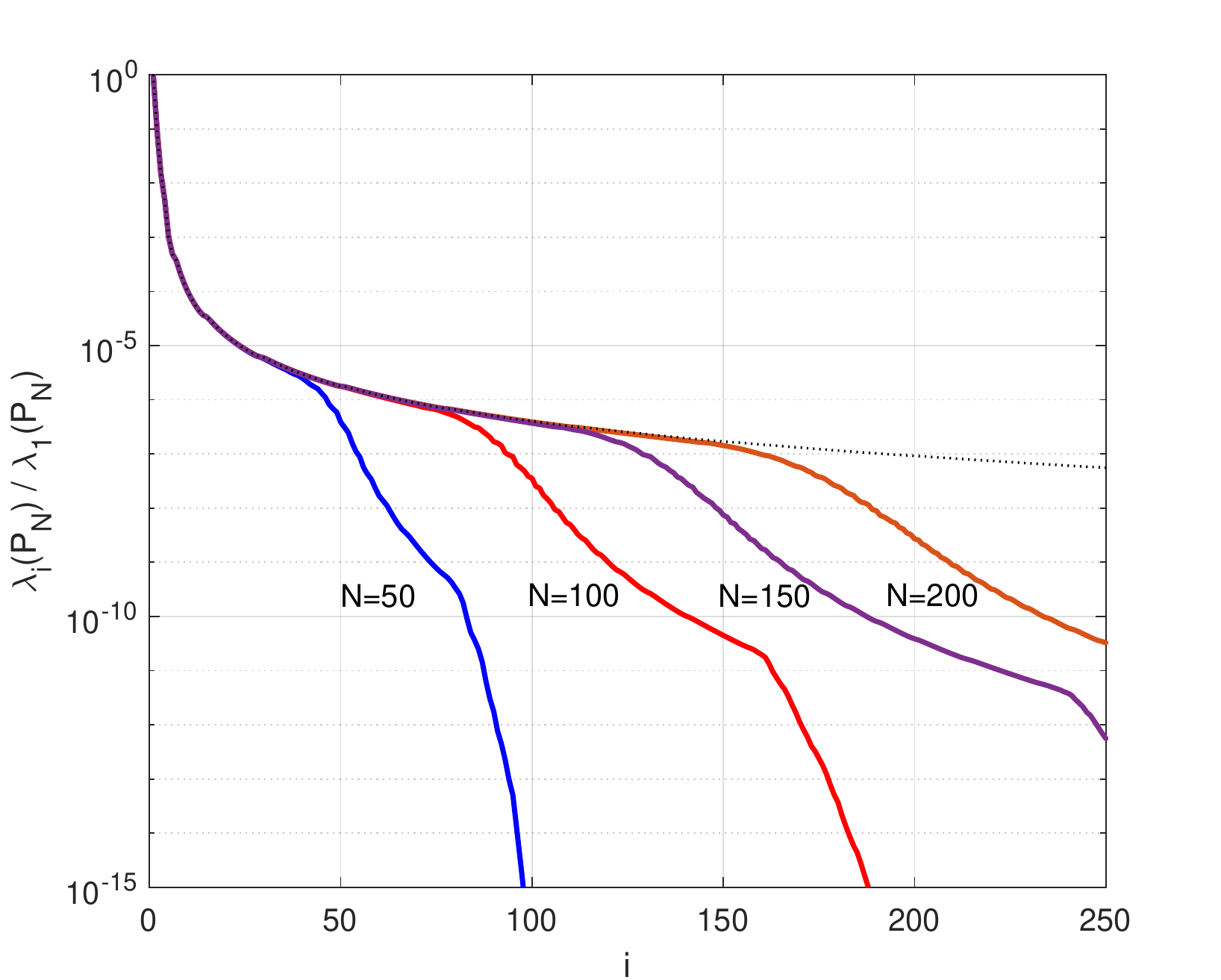}}
    \resizebox{6.4cm}{!}{\includegraphics{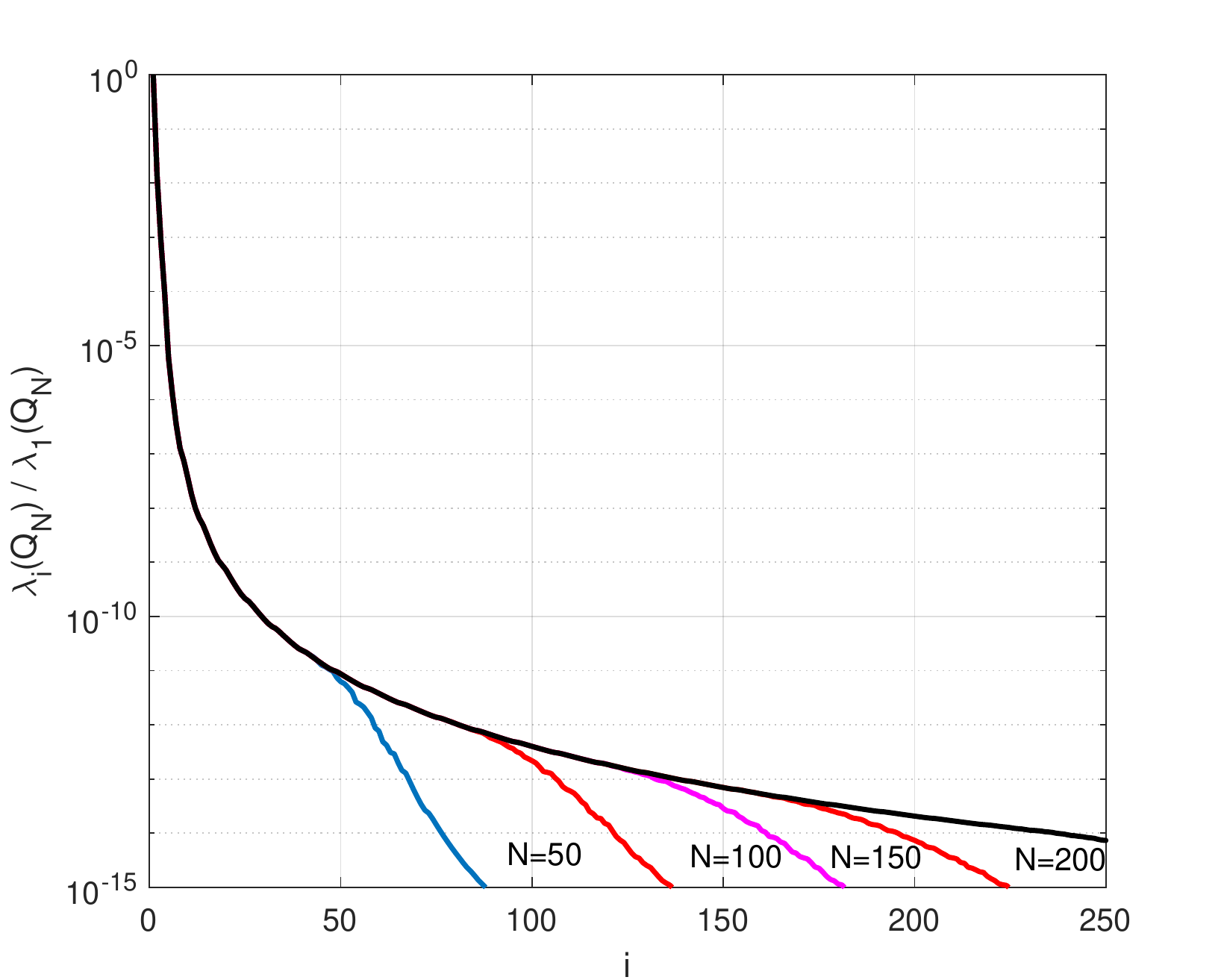}}
    \resizebox{6cm}{!}{\includegraphics{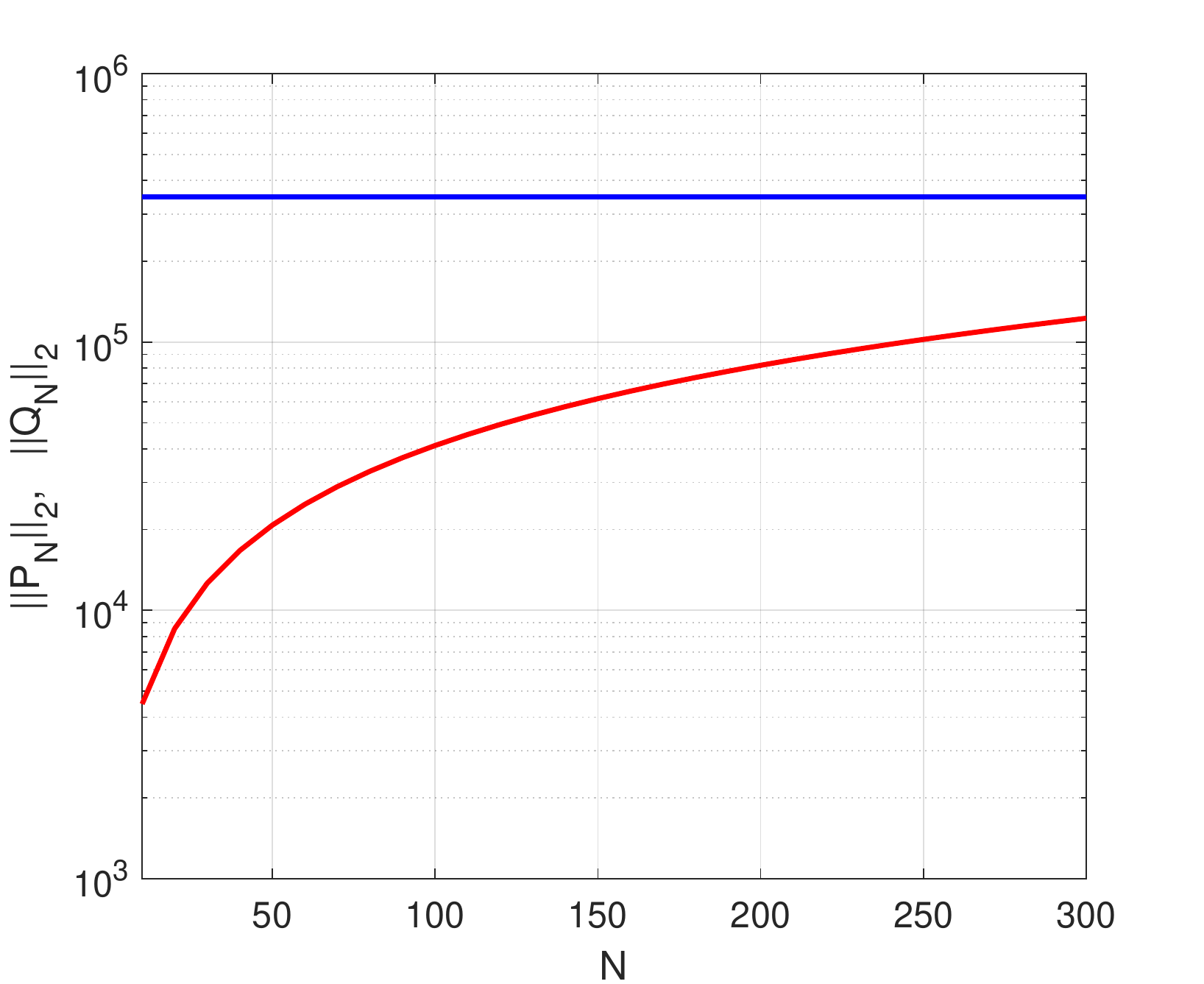}}
	\end{center}
\caption{\label{rankcond} (above) Normalized eigenvalues ($\lambda_i(\cdot)$ denoting the $i$-th, eigenvalue in decreasing order) of matrix $P_N$ and $Q_N$, computed for system~(\ref{didactic2}). (below) Spectral norm of $P_N$ (red) and $Q_N$ (blue) as a function of $N$. }
\end{figure}
%
%
This experiment indicates that the solution of Lyapunov equation (\ref{lyapunov2}), inferred from the representation (\ref{approx-extra}), is more amendable for a low-rank approximation.

Concerning the input-output behavior, Proposition~\ref{theoremmoment} expresses that functions $\Upsilon$ and $\Upsilon_N$ match $N+1$ moments at zero and two at infinity. To have these matching moments carried over by a projection of (\ref{approx}) on a right Krylov space, one needs in general a subspace of dimension $N+2$. At the same time, if more than $N+1$ moments at zero are preserved by the projection, or more than two at infinity, the highest order moments won't match anymore with those of the original transfer function (\ref{defUpsilon}). This can be interpreted as an instance of ``over-fitting" in the sense that particularities of the discretization (\ref{approx}) are captured by the projection, which are not present in the original delay equation and related to the discretization error.
Similar conclusions can be made from the experiment related to the upper right pane of Figure~\ref{rankcond}. On a compact interval for index $i$, the eigenvalue functions of $Q_N$ uniformly converge for $N\rightarrow\infty$ to the limit function indicated in black color, which is related to the original (non-discretized) delay equation (we come back to this in Section~\ref{secinfcheb}).   Important to observe is that, for a given value of $N$, \emph{less than $N$} singular values are related to the limit behavior. This indicates that, at least for a best rank-$k$ approximation of $Q_N$, the choice $k>N$ could lead to a similar instance of over-fitting. All the above elments  motivate us to assure $N$ being sufficiently large, such that 
the dimension of the subspace $k$ satisfies
  \begin{equation} \label{condN}
k\leq N
\end{equation}
and, preferably $k<<N$.

The typical spectrum distribution of delay equations, with rightmost characteristic roots close to the origin, the properties of the spectral discretization, illustrated in Figure~\ref{figeig}, and the above reasoning with respect to matching moments, suggest to build a Krylov space using matrix  $\mathcal{A}_N^{-1}$,
\[
\KK_{k}(A_{N}^{-1},B_N)=\operatorname{span}\left\{B_N,\mathcal{A}_N^{-1} B_N,\ldots,A_{N}^{-(k-1)}B_N\right\}.
\]
Letting the columns of $V_{N,k}$ be an orthogonal basis for this Krylov space, we depict in Figure~\ref{capture-eig} the approximation error on the smallest characteristic roots for (\ref{didactic2}), obtained as the reciprocal of the eigenvalues of 
\begin{equation}\label{projqual1}
V_{N,k}^T \mathcal{A}_N^{-1} V_{N,k}, 
\end{equation}
and as the eigenvalues of
\begin{equation}\label{projqual2}
V_{N,k}^T \mathcal{A}_N V_{N,k}, 
\end{equation}
for $N=30$, $N=60$  and in both cases $k=N$ (such that  (\ref{condN}) is taken into account). The plots illustrates a property observed in many experiments, that it is beneficial to project matrix $\mathcal{A}_N^{-1}$ on the Krylov space, compared to projecting $\mathcal{A}_N$. This observation can be explained by a better separation of the targeted characteristic roots after an inversion of the spectrum.
\begin{figure}
	\begin{center}
		\resizebox{!}{5cm}{\includegraphics{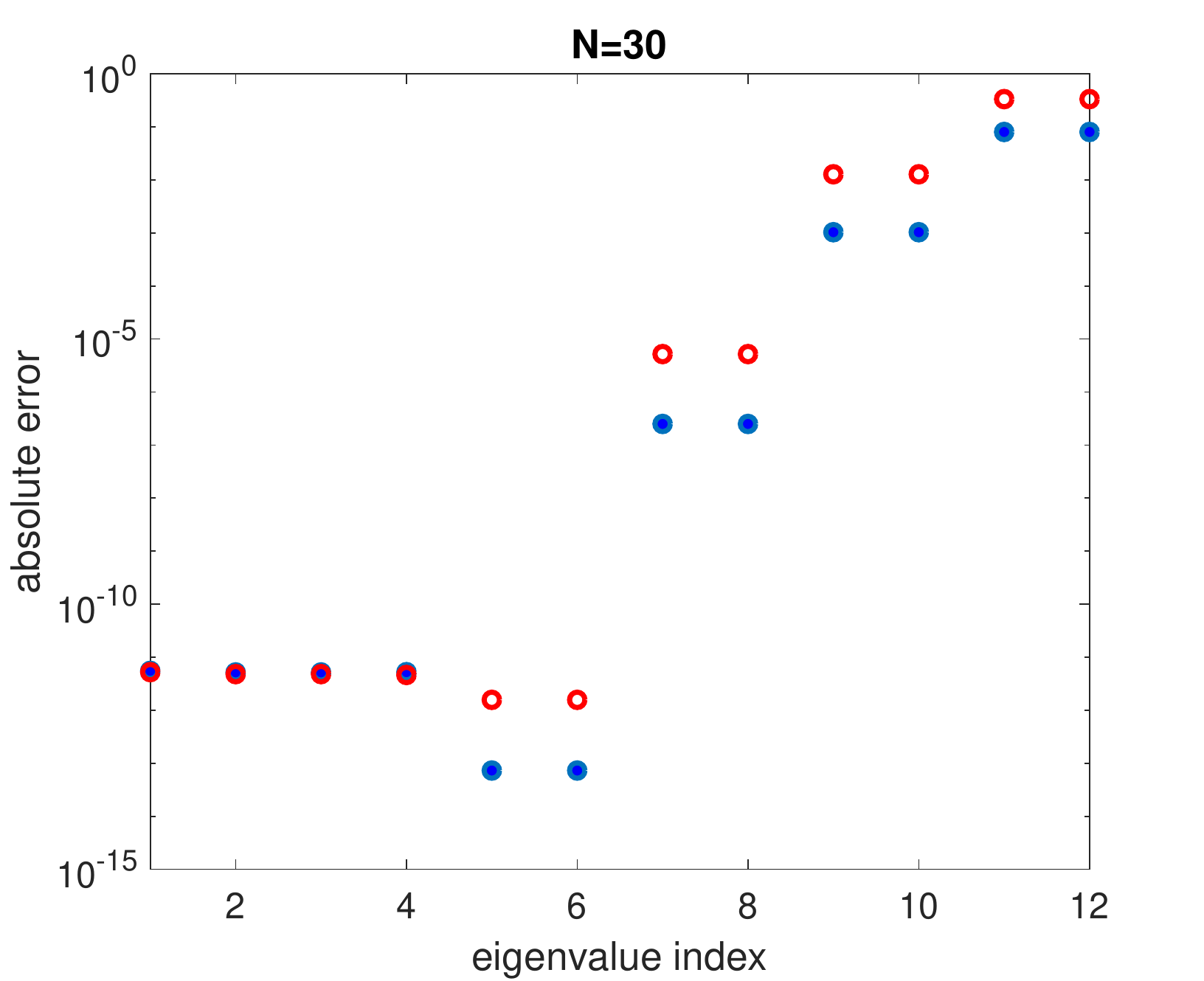}}
		\resizebox{!}{5.2cm}{\includegraphics{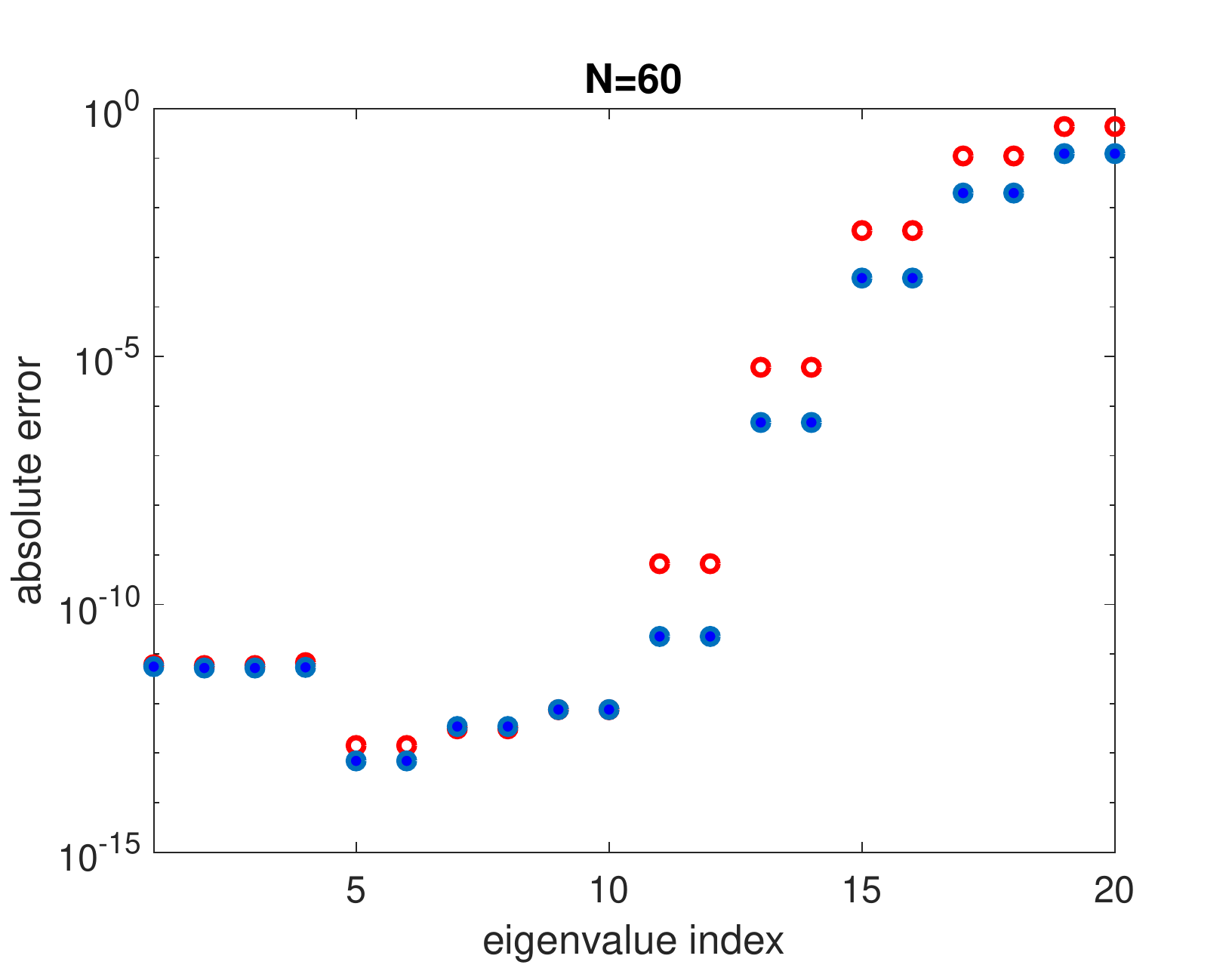}}
		\caption{\label{capture-eig} Absolute error on the smallest characteristic roots of (\ref{didactic2}), obtained from (\ref{projqual1}) (blue) and (\ref{projqual2}) (red). In the left pane we consider $N=k=30$, in the right pane $N=k=60$.}
	\end{center}
	\end{figure}

The preference for building a Krylov space for $\mathcal{A}_N^{-1}$ and  for projecting this matrix, the property that $G_N$ is similar to $\mathcal{A}_N^{-1}$, and, last but not least, the typically faster decay of singular values of $Q_N$ than those of $P_N$  naturally lead us to the representation 
(\ref{mainsparse})-(\ref{transferlarge2}) of the discretized system and associated approximation of the delay Lyapunov matrix. In addition, matrices $\Sigma_N$ and $\Pi_N$ have a sparse structure that can be exploited.   In particular, the property expressed in Proposition~\ref{propdyn} along with condition~(\ref{condN}) will allow us to ultimately arrive at a method  that does not rely on an a-priori choice of critical parameter $N$, similar to the infinite-Arnoldi method for eigenvalue computations~\cite{jarlebring-sisc}.

\subsection{Dynamic construction of a Krylov space}\label{pardyn}

We fix integer $k$ and assume~$N$ large enough such that (\ref{condN}) holds.
We  consider the block Krylov space
\begin{equation}\label{krylov}
\KK_{k}(G_{N},b)=\operatorname{span}\{b,G_{N} b,\ldots,G_{N}^{k-1}b\},
\end{equation}
where $b$ is a block vector of size $(N+1) n\times r$,  having the structure
\begin{equation}\label{startarnoldi}
b=[x_0^T\ 0\ \cdots 0]^T,\ \ 
\end{equation}
with $ x_0\in\RR^{n\times r}$ to be specified in Section~\ref{pardyn2}.
 The block Arnoldi algorithm builds the Krylov sequence, block vector by block vector, where these vectors are orthogonalized. 
Due to the special structure of $b$ and the fact that $G_N$ is a block Hessenberg matrix, whose blocks have size $n\times n$, the block vectors $G_{N} b,\ldots, G_{N}^{k-1}b$ only have their first $2n$, $3n,\ \ldots,\ k n$ block rows different from zero. 
Moreover, in computing the matrix vector products with (\ref{startarnoldi}), only sub-matrices of $G_N$ are needed. Hence, in the  computation of the Krylov space, we can restrict to storing only the nonzero part of the block vectors and using the relevant part of $G_N$.  This leads us to the following procedure.

\smallskip
\begin{enumerate}
\item Apply Algorithm~\ref{algoritme} for computing a basis of $\mathcal{K}_k(G_{k-1},[x_0^T\ 0 \cdots\ 0]^T)$. There we use notation common for Arnoldi iterations: we let
$\underline{\mathcal{H}}_i\in\RR^{(i+1)r\times r i}$
denote the  constructed rectangular block Hessenberg matrix
and $\mathcal{H}_i\in\RR^{r i\times r i}$ the
corresponding $i\times i$ upper blocks.	
\item  A basis for 
\begin{equation}\label{krylovspace}
\mathcal{K}_k(G_N,[x_0^T\ 0 \ \cdots\ 0]^T)
\end{equation}
 is spanned by  the columns of
\begin{equation}\label{defVnk}
V_{N,k}=\left[\begin{array}{cccc} \mathbf{V}_k^T & 0 & \cdots &0  \end{array}\right]^T\in\RR^{ (N+1)n\times k r},
\end{equation}
while, due to the structure of $G_N$, expression
\[
\mathcal{H}_k= V_{N,k}^T\ G_N\ V_{N,k}, 
\]
holds, i.e., $\mathcal{H}_k$ can be considered as an orthogonal projection of $G_N$ on a $k$-dimensional Krylov subspace, for \emph{any} $N$ satisfying (\ref{condN}).
\end{enumerate}

\begin{algorithm}
 \caption{A structure exploiting block Arnoldi algorithm \label{algoritme}}
	\begin{algorithmic}[1]
		\REQUIRE $x_0\in\RR^{n\times r}$ of full column rank, number of iterations $k$
		\STATE Let $x_0= Q_0 \tilde R_0$ be the reduced QR factorization of $x_0$. Set $\mathbf{V}_1=Q_0$ and let $\underline{\mathcal{H}}_0$ be the empty matrix
		\FOR {$i=1,2,\ldots, k$}
		\STATE Let $W_i=G_{i} \left[\begin{array}{c}Q_{i-1}\\0 \end{array}\right]$
		\STATE  Compute $H_i=[\mathbf{V}_i^T\ 0]\ W_i$ 
		and then $\hat{W}_i = W_i - \left[\begin{array}{c}\mathbf{V}_i\\0\end{array}\right] H_i$ (orthogonalization)
		\STATE Compute $\hat W_i= Q_{i} \tilde R_i$ as the reduced QR factorization of $\hat W_i$ (normalization)
		\STATE Let $\underline{\mathcal{H}}_i = \left[\begin{array}{cc}\underline{\mathcal{H}}_{i-1} & H_i \\ 0 & \tilde R_i \end{array}\right] \in\RR^{(i+1)r\times i r}$ 
		\STATE Expand $\mathbf{V}_i$ into $\mathbf{V}_{i+1} =\left[\begin{array}{c|c} \begin{array}{c}\mathbf{V}_i \\ 0 \end{array} & Q_i\end{array}\right]$
		\ENDFOR
		\\
		\hspace*{-0.6cm}\textbf{Output:}\  matrix $\mathbf{V}_{k}$, whose columns are an orthogonal basis for\\
		\hspace*{0.9cm}$\mathcal{K}_k(G_{k-1},[x_0^T\ 0 \cdots\ 0]^T)$, $\mathcal{H}_{k},\ \underline{\mathcal{H}}_k$, satisfying  $\mathcal{H}_{k}=\mathbf{V}_k^T G_{k-1}  \mathbf{V}_k$.
	\end{algorithmic}
\end{algorithm}


\subsection{Dynamic approximation of the transfer function}\label{pardyn2}

We now arrive at the derivation of an approximation of $\Upsilon_N(\slambda)$, defined by (\ref{defUpsilonN}) or, equivalently, (\ref{transferlarge2}), having a prescribed order~$k r$, once again under the condition that (\ref{condN}) is satisfied.
For this we construct the Krylov space 
(\ref{krylovspace})
 and project matrices $F_N, G_N, H_N$, defined in Theorem~\ref{theosparse}, on this Krylov space.
An orthogonal projection yields the following approximation of $\Upsilon_N(\slambda)$:
\begin{equation}\label{approxgammak}
\mathbf{\Upsilon}_{k}(\slambda)=\mathbf{F}_k\ (\slambda \mathbf{G}_k-I)^{-1}\ \mathbf{H}_k,
\end{equation}
where
\begin{equation}\label{defffk}
\begin{array}{lllll}
\mathbf{F}_k&=& C F_{N}\ V_{N,k}&=&C F_{k-1} \mathbf{V}_k, \\
\mathbf{G}_k&=&V_{N,k}^T\ G_N\ V_{N,k}&=&\mathcal{H}_{k}, \\
\mathbf{H}_k&=&V_{N,k}^T\ H_{N}&=&\mathbf{V}_k^T H_{k-1},\ \
\end{array}
\end{equation}
matrix $\mathbf{V}_k$ and $\mathcal{H}_k$ refer to the output of Algorithm~\ref{algoritme}
and  $V_{N,k}$ is given by (\ref{defVnk}).
%
%
The matrices of the reduced model  (\ref{approxgammak}) do \emph{not} depend on $N$. Furthermore, matrices $\mathbf{F}_{k}$ and $\mathbf{H}_{k}$ are \emph{submatrices} of $\mathbf{F}_{k+1}$ and $\mathbf{H}_{k+1}$. Therefore, they can be constructed in a dynamic way when doing iterations of Algorithm~\ref{algoritme},  as is the case with the Hessenberg matrix $\mathbf{G}_k=\mathcal{H}_k$.  

\medskip

With a particular choice of the vector $x_0$ in (\ref{krylovspace}), the transfer function (\ref{approxgammak}) satisfies the following moment matching property with the (original) transfer function (\ref{defUpsilon}) of the time-delay system (\ref{sys}).
\begin{proposition}\cite[Theorem~11]{wimmoment}\label{maintheoreme}
	Let $N,k\in\NN$ with $N\geq k\geq 2$ and let the Krylov space (\ref{krylovspace}) be constructed from
	\[
	x_0=R_0^{-1} B.
	\]
	%
	Then  transfer function
	(\ref{approxgammak})
	satisfies
	\begin{equation}\label{momentprop3bis}
	\left.\frac{d^i \mathbf{\Upsilon}_{k}(\slambda)}{d\slambda^i}\right|_{\slambda=0}
	=\left.\frac{d^i \Upsilon(\slambda)}{d\slambda^i}\right|_{\slambda=0},\ \ i=0,\ldots, k-2
	\end{equation}
	and
	\begin{equation}\label{momenprop4}
	\left.\frac{d^i \mathbf{\Upsilon}_{k}(\slambda^{-1})}{d\slambda^i}\right|_{\slambda=0}
	=\left.\frac{d^i \Upsilon(\slambda^{-1})}{d\slambda^i}\right|_{\slambda=0},\ \ i=0,1.
	\end{equation}
\end{proposition}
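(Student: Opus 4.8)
The plan is to reduce the claimed moment-matching of the reduced transfer function $\mathbf{\Upsilon}_k$ to the already-established moment-matching results for $\Upsilon_N$ (Proposition~\ref{theoremmoment}) by exploiting that a Krylov projection of a rational transfer function in the pencil form $C F_N(sG_N-I)^{-1}H_N$ preserves a controllable number of moments at $s=0$ and at $s=\infty$, and that by Proposition~\ref{propdyn} together with condition~(\ref{condN}) the reduced matrices do not depend on $N$, so $N$ can be taken as large as we like.

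First I would record the standard moment-matching lemma for the shift-invert Krylov space: if $V$ has orthonormal columns spanning $\mathcal{K}_k(G_N, H_N)$ (here with the chosen block starting vector), and we set $\mathbf{G}_k = V^T G_N V$, $\mathbf{H}_k = V^T H_N$, $\mathbf{F}_k = C F_N V$, then the reduced transfer function $\mathbf{F}_k(s\mathbf{G}_k - I)^{-1}\mathbf{H}_k$ matches the moments of $\Upsilon_N(s) = C F_N(sG_N - I)^{-1}H_N$ at $s=0$ up to order $k-2$ (one loses one order relative to $k$ because the Krylov space contains $G_N^j H_N$ only for $j \le k-1$ and the projected quantity $\mathbf{F}_k \mathbf{G}_k^j \mathbf{H}_k$ equals $C F_N G_N^j H_N$ only as long as the intermediate iterates $G_N^j H_N$ lie in the range of $V$, which is guaranteed for $j \le k-1$ on one side and $j\le k-2$ when both the $H_N$- and the $F_N$-side are involved — I would make this precise by the usual argument that $G_N^j V = V \mathbf{G}_k^j$ on the relevant subspace). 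The moment at infinity requires a separate but analogous observation: writing $(sG_N - I)^{-1} = -I - sG_N - s^2 G_N^2 - \cdots$ is not directly available, so instead one uses $\Upsilon_N(s^{-1}) = C F_N(s^{-1}G_N - I)^{-1}H_N = s\,C F_N(G_N - sI)^{-1}H_N$, whose expansion at $s=0$ begins $-s\,C F_N H_N + \ldots$; the zeroth and first moments at infinity are thus controlled by $C F_N H_N$ and $C F_N G_N H_N$, both of which are reproduced exactly by the projection since $H_N$ and $G_N H_N$ are in the Krylov space for $k \ge 2$.

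Next I would invoke Proposition~\ref{theoremmoment}, which states $\Upsilon_N$ matches $\Upsilon$ at $0$ up to order $N$ and at $\infty$ up to order $1$, independently of $N$. Combining: $\mathbf{\Upsilon}_k$ matches $\Upsilon_N$ at $0$ up to order $k-2$, and for $N \ge k$ we have $k-2 \le N$, so transitivity of Taylor-coefficient agreement gives that $\mathbf{\Upsilon}_k$ matches $\Upsilon$ at $0$ up to order $k-2$ — this is~(\ref{momentprop3bis}). For the moments at infinity, $\mathbf{\Upsilon}_k$ and $\Upsilon_N$ agree at orders $0,1$, and $\Upsilon_N$ and $\Upsilon$ agree at orders $0,1$, giving~(\ref{momenprop4}). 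The role of the specific choice $x_0 = R_0^{-1}B$ is exactly that it makes $H_{N}$ (hence the Krylov starting block after multiplication by $G_N$, cf.\ the identity $G_N(c_N\otimes B) = \hat H_N$ and $G_N \hat H_N = H_N$ used in the proof of Theorem~\ref{theosparse}) the correct vector, so that the Krylov space is $\mathcal{K}_k(G_N, H_N)$ rather than for some unrelated vector; I would note that with $x_0 = R_0^{-1}B$, the block vector $b$ in~(\ref{startarnoldi}) equals $\hat H_N = G_N^{-1} H_N$ up to the leading block, and the span $\{b, G_N b, \ldots, G_N^{k-1} b\}$ coincides with what is needed. This consistency between $x_0$ and $H_N$ is the one nontrivial bookkeeping point.

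The main obstacle I anticipate is not conceptual but the careful tracking of how many moments survive the projection — in particular verifying the precise loss of exactly one order at $s=0$ (so that the bound is $k-2$ and not $k-1$ or $k$), which hinges on whether $F_N$ lies, in the relevant sense, on the "output side" and therefore whether one needs $G_N^j H_N \in \mathrm{range}(V)$ for $j$ up to $k-1$ or only $k-2$. Since Proposition~\ref{maintheoreme} is quoted directly from \cite[Theorem~11]{wimmoment}, the cleanest route — and the one I would actually take in the paper — is to cite that theorem for the single-input case and remark that the block (multiple-input) extension is obtained by replacing scalars by $r\times r$ blocks throughout the Arnoldi recurrence of Algorithm~\ref{algoritme}, the moment-matching argument being unchanged since it only uses $G_N^j V_{N,k} = V_{N,k}\mathcal{H}_k^j$ for $j \le k-1$ together with the block structure of the starting vector; everything else is the general theory of (block) rational Krylov model reduction.
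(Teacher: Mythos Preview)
The paper does not supply its own proof of this proposition: it is stated with the citation \cite[Theorem~11]{wimmoment} and nothing further. So there is no in-paper argument to compare against, and your final paragraph --- simply citing that theorem and remarking that the block extension goes through verbatim --- is exactly what the paper does.

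Your sketch of the underlying argument is essentially correct, but one bookkeeping point is slightly off. With $x_0=R_0^{-1}B$ the starting block $b$ in~(\ref{startarnoldi}) equals $\hat H_N=G_N^{-1}H_N$ \emph{exactly} (not merely ``up to the leading block''), so the Krylov space built is $\mathcal{K}_k(G_N,G_N^{-1}H_N)=\operatorname{span}\{G_N^{-1}H_N,H_N,\ldots,G_N^{k-2}H_N\}$, not $\mathcal{K}_k(G_N,H_N)$ as you first write. This shift is precisely what makes the count work: for the moments at $s=0$ one needs $G_N^{j}H_N\in\operatorname{range}(V_{N,k})$, which holds for $j=0,\ldots,k-2$ (since $G_N^{j}H_N=G_N^{j+1}b$), giving exactly orders $0,\ldots,k-2$ in~(\ref{momentprop3bis}); for the first moment at infinity one needs $G_N^{-1}H_N\in\operatorname{range}(V_{N,k})$, which holds because $G_N^{-1}H_N=b$ is the starting vector. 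Had the space been $\mathcal{K}_k(G_N,H_N)$ you would get $k-1$ moments at zero but lose the moment at infinity. So the specific choice $x_0=R_0^{-1}B$ is doing real work, and your identification of it as ``the one nontrivial bookkeeping point'' is apt --- just track it a little more carefully.
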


Note that Proposition~\ref{maintheoreme} concerns the matching of moments with the transfer function of the  \emph{original} delay system (\ref{sys}). This is due to to the property that the moments, preserved by projection of the discretized system, are precisely matching moments between the discretized system and  the delay system, by Proposition~\ref{theoremmoment}.

\subsection{Dynamic approximation of the delay Lyapunov matrix}\label{pardynlyap}
The evaluation of $\mathcal{P}_N(t)$, defined by (\ref{mainsparse}), relies on solving Lyapunov equation (\ref{lyapunov2}).
An established way to solve large-scale Lyapunov equations consists of computing a low-rank approximation obtained from the projection of the Lyapunov equation on a Krylov space, see, e.g., \cite{simoncini} and the references therein. 

To determine an appropriate Krylov space, it is useful to express $Q_N$ in terms of matrix exponentials,
\begin{equation}\label{defQN-me}
Q_N=\int_0^{\infty} e^{G_N^{-1} s} \left(G_N^{-1} H_N\right)\ \left(H_N^T  G_N^{-T}\right) e^{G_N^{-T}s}ds.
\end{equation}
Hence, a low rank approximation of $Q_N$ can be induced by approximating the action of $e^{G_N^{-1} t}$ on vector(s) $(G_N^{-1} H_N)$ in a low-dimensional space.  This motivates us to include 
\[
G_N^{-1} H_N= \left[\begin{array}{c}R_0^{-1}B \\ 0 \\ \vdots \\ 0\end{array}\right]
\]
 in the Krylov space. Furthermore, since the rightmost characteristic roots of a delay equation are typically very well approximated by the dominant eigenvalues of $G_N$ (equivalently, the smallest eigenvalues of $\mathcal{A}_N$ in modulus), while the largest eigenvalues of $A_N$  have no correspondence with characteristic roots (see the arguments in Section~\ref{parprop} and the illustration in Figure~\ref{figeig}), approximating the dominant eigenspace of $G_N$ should be favored, which brings us once again to Krylov space (\ref{krylovspace}) with starting vector $x_0=R_0^{-1} B$.

 Replacing $Q_N$ in (\ref{lyapunov2}) by $V_{N,k} \mathbf{Q_k} V_{N,k}^T$
%
 and requiring the residual to be orthogonal with respect to the Krylov space, we arrive at the projected Lyapunov equation
\begin{equation}\label{proj-lyap-eqn}
\mathbf{G}_{k} \mathbf{Q}_{k}+ \mathbf{Q}_{k}  \mathbf{G}_{k}^{T}+\mathbf{H}_{k} \mathbf{H}_{k}^T=0.
\end{equation}
Hence, under assumption that $\mathbf{G}_k$ is invertible  we can approximate
\begin{equation}\label{approxQ}
\begin{array}{lll}
Q_N &\approx& V_{N,k}  \mathbf{Q}_{k} V_{N,k}^T
\\
&=& \int_0^{\infty}V_{N,k} e^{ s \mathbf{G}_{k}^{-1}  } (\mathbf{G}_{k}^{-1} \mathbf{H}_{k})\
 (\mathbf{H}_{k}^T \mathbf{G}_{k}^{-T})
e^{ s \mathbf{G}_{k}^{-T} } V_{N,k}^T\ ds.
 \end{array}
\end{equation}
Let us now compare  approximation (\ref{approxQ}) with expression (\ref{defQN-me}). By construction of the Krylov space we have 
\[
G_N^{-1} H_N=V_{N,k} \beta
\]
for some matrix $\beta$ of appropriate dimensions. As a consequence, 
\[
H_N=G_N V_{N,k}\beta \ \Rightarrow \ \mathbf{H}_k= \mathbf{G}_k\beta.
\]
Thus, the approximation of $Q_N$ as in (\ref{approxQ}) can be interpreted in terms of the approximation
\begin{equation}\label{approxexp}
  e^{t G_N^{-1}} (G_N^{-1} H_N)=  e^{t G_N^{-1}} \left(V_{N,k}\right)\beta \approx  V_{N,k} e^{t\mathbf{G}_{k}^{-1}} \beta.
\end{equation}

Substituting the right-hand side of  (\ref{approxQ}) into (\ref{mainsparse}) we get
\begin{equation}\label{comb1}
\begin{array}{ll}
\mathcal{P}_N(t) &\approx F_N V_{N,k} \mathbf{Q}_k V_{N,k}^T  e^{G_N^{-T}t} F_N^T
\\
&= F_N V_{N,k} \mathbf{Q}_{k} \left(e^{t G_N^{-1}} V_{N,k}\right)^T F_N^T.
\end{array}
\end{equation}
To approximate $e^{t G_N^{-1}} V_{N,k}$ we use the same principle underlying (\ref{approxexp}). More precisely,
we build a Krylov space, $\mathrm{span}\left\{V_{N,k},\  G_N V_{N,k},\ \ldots, G_N^{k} V_{N,k} \right\}$. Since the columns of $V_{N,k}$  already span a Krylov space, this can be done by doing $k$ more iterations of Algorithm \ref{algoritme}, provided condition  (\ref{condN}) on $N$ is strengthened to 
\begin{equation}\label{condN2}
2k\leq N.
\end{equation}
It results in a basis $V_{N,2k}$ such that $V_{N,k}=V_{N,2k} \left[\begin{array}{c}I\\ 0\end{array}\right]$, hence,
we can approximate
\begin{equation}\label{comb2}
\left(e^{t G_N^{-1}} V_{N,k}\right) \approx  V_{N,2k} e^{t \mathbf{G}_{2k}^{-1}}  \left[\begin{array}{c}I\\ 0\end{array}\right].
\end{equation}
Finally, combining (\ref{comb1}) and (\ref{comb2})  we arrive at the following  approximation of $\mathcal{P}_N(t)$ and thus of the Lyapunov matrix  $P(t)$,
\begin{equation}\label{approxUk}
\mathbf{P}_k(t)=[R_0\ R_1\  \cdots R_{k-1}] \mathbf{V}_{k} \mathbf{Q}_{k} \left[I\ 0\right]e^{t \mathbf{G}_{2k}^{-T}} \mathbf{V}_{2k}^T
\left[\begin{array}{c} R_0^T\\ R_1^T \\ \vdots \\ R_{2k-1}^T\end{array}\right],
\end{equation}
where $\mathbf{Q}_k$ satisfies (\ref{proj-lyap-eqn}). This brings us to Algorithm~\ref{algoritm2}.

\begin{algorithm}[h]
	\caption{Construction of a (uniformly) low-rank approximation of the the delay Lyapunov matrix \label{algoritm2}}
	\begin{algorithmic}[1]
		\REQUIRE $B\in\RR^{n\times r}$ of full column rank, parameter $k$ determining number of Arnoldi iterations
		\STATE Set $x_0=R_0^{-1} B$ and perform $2k$ iterations of Algorithm~\ref{algoritme}, resulting
		\\
		 in $\mathbf{V}_{2k}$ and $\mathbf{G}_{2k}=\mathcal{H}_{2k}$; set
		 	 \[
		 	   \mathbf{G}_k= \left[\begin{array}{cc}I_{kr} & 0\end{array}\right] \mathbf{G}_{2k} \left[\begin{array}{c}I_{kr}\\ 0\end{array}\right].
		 	   \]
		\STATE Construct matrices $\mathbf{H}_k=\mathbf{V}_k^T H_{k-1}$ and $\mathbf{L}_k=[R_0\ R_1 \cdots R_{2k-1}] \mathbf{V}_{2k}$. 
		\STATE Solve Lyapunov equation (\ref{proj-lyap-eqn}) for $\mathbf{Q}_k$. \\
		\hspace*{-0.6cm}\textbf{Output:}\  matrices $\mathbf{L}_k,\ \mathbf{Q}_k,\  \mathbf{G}_{2k}$ from which $\mathbf{P}_k$
		can be constructed \\
		\hspace*{0.8cm} according to (\ref{approxUk}).
	\end{algorithmic}
\end{algorithm}

\medskip

Finally we note that the low-order approximation (\ref{approxgammak}) of transfer function $\Upsilon$ and the approximation (\ref{approxUk}) of Lyapunov matrix $P(t)$ of rank smaller or equal to $kr$ are still consistent, in view of Proposition~\ref{prop-cor1} and Proposition~\ref{prop-cor2}. 
\begin{proposition}\label{prop-cor3}  We can express
	$
	\left\|\mathbf{\Upsilon}_{k}\right\|_2^2=\mathrm{Tr}\left(C \mathbf{P}_k(0) C^T
	\right)
	$
\end{proposition}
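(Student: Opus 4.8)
The plan is to prove this by the same route that yields Proposition~\ref{prop-cor2} for the discretized system, now applied to the reduced triple $(\mathbf{F}_k,\mathbf{G}_k,\mathbf{H}_k)$ of (\ref{approxgammak})--(\ref{defffk}): express $\|\mathbf{\Upsilon}_k\|_2^2$ as the trace of a controllability Gramian via the standard argument behind (\ref{exprUN})--(\ref{lyap-intro}), identify that Gramian with $\mathbf{Q}_k$, and then read off $C\mathbf{P}_k(0)C^T$ directly from the definition (\ref{approxUk}). Throughout I would use the standing assumption, implicit in the definition of $\mathbf{P}_k$ and needed for $\|\mathbf{\Upsilon}_k\|_2$ to be finite, that $\mathbf{G}_k$ is invertible with $\mathbf{G}_k^{-1}$ Hurwitz.

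\emph{Step 1 (transfer-function side).} First I would put the descriptor transfer function (\ref{approxgammak}) into ordinary state-space form: since $(\slambda\mathbf{G}_k-I)^{-1}=(\slambda I-\mathbf{G}_k^{-1})^{-1}\mathbf{G}_k^{-1}$, we have $\mathbf{\Upsilon}_k(\slambda)=\mathbf{F}_k(\slambda I-\mathbf{G}_k^{-1})^{-1}(\mathbf{G}_k^{-1}\mathbf{H}_k)$, which is the passage from (\ref{transferlarge2}) to (\ref{approx-extra}) carried out at the reduced scale. Applying to this realization the classical formula for the $\mathcal{H}_2$ norm in terms of the controllability Gramian gives $\|\mathbf{\Upsilon}_k\|_2^2=\mathrm{Tr}(\mathbf{F}_k\widehat{Q}\,\mathbf{F}_k^T)$, where $\widehat{Q}$ is the unique solution of $\mathbf{G}_k^{-1}\widehat{Q}+\widehat{Q}\,\mathbf{G}_k^{-T}+(\mathbf{G}_k^{-1}\mathbf{H}_k)(\mathbf{G}_k^{-1}\mathbf{H}_k)^T=0$. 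Pre-multiplying this equation by $\mathbf{G}_k$ and post-multiplying by $\mathbf{G}_k^T$ turns it into $\mathbf{G}_k\widehat{Q}+\widehat{Q}\,\mathbf{G}_k^T+\mathbf{H}_k\mathbf{H}_k^T=0$, which is precisely the projected Lyapunov equation (\ref{proj-lyap-eqn}); by uniqueness $\widehat{Q}=\mathbf{Q}_k$, hence $\|\mathbf{\Upsilon}_k\|_2^2=\mathrm{Tr}(\mathbf{F}_k\mathbf{Q}_k\mathbf{F}_k^T)$.

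\emph{Step 2 (Lyapunov-matrix side).} Next I would evaluate the definition (\ref{approxUk}) of $\mathbf{P}_k$ at $t=0$: the factor $e^{t\mathbf{G}_{2k}^{-T}}$ becomes the identity, and, writing $F_{k-1}=[R_0\ \cdots\ R_{k-1}]$, recognizing the rightmost factor of (\ref{approxUk}) as $[R_0\ \cdots\ R_{2k-1}]^T$, and using $\mathbf{L}_k=[R_0\ \cdots\ R_{2k-1}]\mathbf{V}_{2k}$ of Algorithm~\ref{algoritm2}, this leaves $\mathbf{P}_k(0)=(F_{k-1}\mathbf{V}_k)\,\mathbf{Q}_k\,[I\ 0]\,\mathbf{L}_k^T$. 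Using the nesting $V_{N,k}=V_{N,2k}\left[\begin{array}{c}I\\0\end{array}\right]$ recorded just before (\ref{comb2}) --- equivalently, that $\mathbf{V}_k$ bordered by zero blocks is the leading block column of $\mathbf{V}_{2k}$, which is how Algorithm~\ref{algoritme} expands $\mathbf{V}_i$ --- I get $[I\ 0]\,\mathbf{L}_k^T=\big([R_0\ \cdots\ R_{2k-1}]\,\mathbf{V}_{2k}\left[\begin{array}{c}I\\0\end{array}\right]\big)^T=(F_{k-1}\mathbf{V}_k)^T$. Hence $\mathbf{P}_k(0)=(F_{k-1}\mathbf{V}_k)\,\mathbf{Q}_k\,(F_{k-1}\mathbf{V}_k)^T$, and since $\mathbf{F}_k=C F_{k-1}\mathbf{V}_k$ by (\ref{defffk}) this gives $C\,\mathbf{P}_k(0)\,C^T=\mathbf{F}_k\mathbf{Q}_k\mathbf{F}_k^T$. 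Taking traces and comparing with Step 1 closes the proof.

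I expect the only genuinely delicate point to be the bookkeeping in Step 2, namely confirming that the sub-block $[I\ 0]$ appearing in (\ref{approxUk}), together with the truncation of the row $[R_0\ \cdots\ R_{2k-1}]$ to $[R_0\ \cdots\ R_{k-1}]$, really does cut $\mathbf{V}_{2k}$ down to $\mathbf{V}_k$; this is a direct consequence of $V_{N,k}=V_{N,2k}\left[\begin{array}{c}I\\0\end{array}\right]$ and the block-row structure of $F_N$ in Theorem~\ref{theosparse}. The state-space rewriting in Step 1 and the pre/post-multiplication that turns the standard Lyapunov equation into (\ref{proj-lyap-eqn}) are routine, the only substantive input being the already-assumed stability of $\mathbf{G}_k^{-1}$.
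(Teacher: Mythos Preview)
Your proof is correct and follows essentially the same route as the paper: both arguments reduce to the identity $C\mathbf{P}_k(0)C^T=\mathbf{F}_k\mathbf{Q}_k\mathbf{F}_k^T$ and then invoke the Lyapunov characterization (\ref{proj-lyap-eqn}) of $\|\mathbf{\Upsilon}_k\|_2^2$. Your Step~1 spells out explicitly what the paper compresses into the phrase ``combined with (\ref{proj-lyap-eqn}), characterize the $\mathcal{H}_2$ norm of $\mathbf{\Upsilon}_k$'', and in Step~2 you work directly from the explicit formula (\ref{approxUk}) with the nesting $V_{N,k}=V_{N,2k}\left[\begin{smallmatrix}I\\0\end{smallmatrix}\right]$, whereas the paper tacitly goes back to the pre-(\ref{comb2}) expression $F_N V_{N,k}\mathbf{Q}_k V_{N,k}^T F_N^T$; these are two presentations of the same bookkeeping.
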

\begin{proof} From (\ref{approxUk}) we directly have
	\[
	\begin{array}{lll} 
	\mathrm{Tr}\left(	C \mathbf{P}_k(0) C^T\right) &=&  \mathrm{Tr}\left( C F_N \mathbf{V}_{N,k} \mathbf{Q}_k  \mathbf{V}_{N,k}^T F_N^T C^T\right)
	\\
	&=& \mathrm{Tr}\left(\mathbf{F}_k \mathbf{Q}_k  \mathbf{F}_k^T\right)
	\end{array}
	\]
	The latter expression, combined with (\ref{proj-lyap-eqn}), characterize the $\mathcal{H}_2$ norm of $\mathbf{\Upsilon}_k$.	
\end{proof}

\subsection{Implementation aspects and  computational complexity} \label{parimpl}

Algorithm~\ref{algoritm2} is fully dynamic, in the sense that by increasing iteration count  $k$,  matrices $\mathbf{V}_k$, $\mathbf{G}_k$,  $\mathbf{L}_k$, etc., only need to be extended or updated, hence, the iteration can be resumed if the accuracy is deemed insufficient. If $k$ is not chosen a-priori, this brings us to discuss stopping criteria.
The most reliable approach consists of testing the residual for  boundary value problem (\ref{BVP}) at a set of time-instants in the interval under consideration. Substituting (\ref{approxUk}) in (\ref{BVP}) and letting the columns of $\mathcal{W}_k$ be on orthogonal basis for the column space of
$
\left[\mathbf{L}_{k}\ A_0 \mathbf{L}_{k}\ \cdots\ A_m\mathbf{L}_{k}  \right],
$
every term in the equations has its column, respectively row range contained in those of $\mathcal{W}_k$, respectively $\mathcal{W}_k^T$. As a consequence the Euclidean norm of the residual at a given time-instant can be expressed in terms of the residual for a boundary value problem where the size of the matrices is determined by the rank of $\mathcal{W}_k$. 

The construction of matrix $\mathcal{W}_k$ however introduces a significant additional computational cost.  To our experience a good indicator of convergence consists of determining the residual for Lyapunov equation (\ref{lyapunov2}).  Recall that $G_N Q_N $ is approximated by
\[
G_N V_{N,k} \mathbf{Q}_k V_{N,k}^T=V_{N,k+1}\underline{\mathcal{H}}_k \left[\mathbf{Q}_k\ 0\right]  V_{N,k+1}^T.
\]
At the same time we have 
\[
H_N=V_{N,k}V_{N,k}^T H_N=V_{N,k} \mathbf{H}_k= V_{N,k+1} \left[ \begin{array}{c} \mathbf{H}_k \\ 0 \end{array}\right].
\]
Since the columns of $V_{N,k+1}$ are orthogonal, the residual of (\ref{lyapunov2}), $R_{N,k}$ satisfies
{\small
	\begin{equation}\label{residualnorm}
	\|R_{N,k}\|_2=\left\|
	\underline{\mathcal{H}}_k \left[\mathbf{Q}_k\ 0\right] 
	+\left[\begin{array}{c}\mathbf{Q}_k^T\\ 0\end{array}\right]\underline{\mathcal{H}}_k^T + 
	\left[\begin{array}{c} \mathbf{H}_k \\0 \end{array}\right]  
	\left[\begin{array}{ccc}  \mathbf{H}_k^T  &0 \end{array}\right]
	\right\|_2.
	\end{equation}}
Note that the residual norm can be expressed in terms of projected matrices and is independent of $N$.

What concerns the computational complexity, the core of Algorithm~\ref{algoritm2} consists of doing $2k$ iterations of Algorithm~\ref{algoritme}. Expressed in terms of operations on \emph{vectors of length $n$}, the computational complexity is as follows:
\begin{center}
	\begin{tabular}{ll}  
		number of backward solves: & $2rk$,   \\
		number of matrix vector products: & $O(r k^2)$,     \\
		number of scalar products (orthogonalization):   & $O(r^2 k^3)$. \\
	\end{tabular}
\end{center}
It is important to point out that all backwards solves are with the same matrix ($R_0$), inherent to an Arnoldi type algorithm. Hence, the first step in our implementation consists of computing a (sparse) LU factorization of matrix $R_0=\sum_{i=0}^m A_i$.  For the remaining steps of Algoirthm~\ref{algoritm2} the dominant cost in most cases consists of solving Lyapunov equation 
(\ref{proj-lyap-eqn}) for $\mathbf{Q}_k$, whose complexity is described by $\mathcal{O}(r^3 k^3)$ operations for the adopted Bartels-Stewart algorithm. 
In addition, our implementation fully exploits the property that, due to the special structure of $G_k$ and the starting vector of the Arnoldi iteration,  $\mathbf{V}_k$ can be represented in the form
\begin{equation}\label{repsparse}
\mathbf{V}_k= \left( I_k\kron W_k\right)
\left[\begin{array} {llll} 
v_{1,1}  & v_{1,2} & \cdots & v_{1,k} \\
0 &  v_{2,2}  &  \dots & v_{2,k} \\
\vdots &  \ddots  & \ddots        & \vdots \\
0  & \cdots           &   0     & v_{k,k}
\end{array}\right],
\end{equation}
where both factors are orthogonal matrices, matrix $W_k$ has dimensions $n\times s$ with $s\leq kr$  and $v_{i,j}\in\mathbb{R}^{s\times r},\ i,j=1,\ldots,k$. Furthermore, both factors can be dynamically constructed. These properties are fundamental in the so-called tensor infinite Arnoldi method and CORK framework (COmpact Rational Krylov algorithms) for nonlinear eigenvalue problems \cite{cork,jarlebring-tensor}, on their turn generalizing \cite{Bai:2005:SOAR} for quadratic eigenvalue problems. We refer to these references for more details on representation~(\ref{repsparse}). Obviously, for large $n$ its use leads to a significant reduction in the memory requirements, but it is also beneficial in terms of computational complexity, as argued in \cite{jarlebring-tensor}.

\subsection{Interpretation in terms of projections of an infinite-dimensional system}\label{secinfcheb}

The spectral discretization in Section~\ref{secfinal}, resulting in a finite-dimensional approximation of dimension $(N+1)n$, played a major role in the technical derivation of Algorithm~\ref{algoritm2}. However, eventually the role of parameter $N$ is marginal:
\begin{itemize}
	\item the execution of Algorithm~\ref{algoritm2} (and Algorithm~\ref{algoritme} on which it relies), as well as the discussed stopping criteria, do not rely on a choice of $N$;
	\item the algorithms are dynamic in the sense that the iterative processes can always be resumed;
	\item Proposition~\ref{maintheoreme} connects moments of transfer functions $\mathbf{\Upsilon}_k$ and $\Upsilon$ directly.
	\end{itemize}
As a matter of fact it is only implicitly assumed  that $N$ is \emph{sufficiently large} (such that (\ref{condN2}) holds). A limit argument, for $N\rightarrow\infty$, provides some intuition for the existence of an interpretation of Algorithm~\ref{algoritm2}  as an algorithm acting on an infinite-dimensional linear system equivalent to (\ref{sys}).  This is also suggested by Figure~\ref{rankcond}, where the singular value functions of $Q_N$ uniformly converge on compact intervals to the limit function displayed in black color. In what follows we make a connection with an infinite-dimensional linear system  concrete.

We reconsider system (\ref{sys}) and define 
\[
v(\theta,t)=x(t+\theta),\ \  \theta\in[-\tau_m,\ 0],\ \ t\geq 0.
\]
Solutions of (\ref{sys}), starting at $t=0$, are continuous for $t\geq 0$, and they satisfy the advection PDE
\begin{equation}\label{pde-inf}
\left\{\begin{array}{ll}
\frac{\partial v}{\partial t} (\theta,t)-\frac{\partial v}{\partial \theta} (\theta,t)=0, & \theta\in [-\tau_m,\ 0),\ \ t\geq \tau_m,\\
\frac{\partial v}{\partial t}(0,t)=A_0 v(0,t)+\sum_{i=1}^m A_i v(-\tau_i,t)+ B u(t),\ & t\geq \tau_m,\\
\end{array}\right.
\end{equation}
see \cite{krstic}. Let us represent $v(\theta,t)$ in a Chebyshev series in variable $\theta$ on the interval $[-\tau_m,\ 0]$,
\[
\begin{array}{l}
v(\theta,t)=\sum_{j=0}^\infty c_j(t) T_j\left( \frac{2\theta}{\tau_m}+1\right),\ \theta\in [-\tau_m,0]. 
\end{array}
\]

The second equation in (\ref{pde-inf}) then becomes
\begin{equation}\label{firstseries}
\begin{array}{lll}
\sum_{j=0}^{\infty} \dot c_j(t)&=&A_0 \left(\sum_{j=0}^\infty c_j(t)\right)  +\sum_{i=1}^m A_i \left(\sum_{j=0}^\infty c_j(t) T_j\left( -\frac{2\tau_i}{\tau_m}+1\right)\right) 
\\
&=&\sum_{j=0}^{\infty} c_j(t) \left(A_0+\sum_{i=1}^m A_i T_j\left( -\frac{2\tau_i}{\tau_m}+1\right) \right).
\end{array}
\end{equation}
In the same way the first equation in (\ref{pde-inf}) becomes
\begin{equation}\label{secondseries}
\sum_{j=0}^{\infty} \dot c_j(t) T_j\left( \frac{2\theta}{\tau_m}+1\right)=\sum_{j=1}^{\infty} c_j(t)  \frac{2j}{\tau_m} U_{j-1} \left( \frac{2\theta}{\tau_m}+1\right),
\end{equation}
where we employed the property $$\dot T_{j+1}(\theta)=(j+1) U_j(\theta),$$ with $U_j$ the Chebyshev polynomial of the second kind and order $j$, for $j\geq 0$.

For $j\geq 2$ we can substitute expression
\[
T_j\left( \frac{2\theta}{\tau_m}+1\right)=\frac{1}{2} \left( U_j\left( \frac{2\theta}{\tau_m}+1\right)- U_{j-2}\left( \frac{2\theta}{\tau_m}+1\right)\right)
\]
in (\ref{secondseries}), as well as 
\[
T_1\left( \frac{2\theta}{\tau_m}+1\right)=\frac{1}{2}  U_1\left( \frac{2\theta}{\tau_m}+1\right),\ \ T_0\left( \frac{2\theta}{\tau_m}+1\right)=U_0\left( \frac{2\theta}{\tau_m}+1\right).
\] 
Multiplying subsequently left and right hand side of (\ref{secondseries}) with
$$
U_{i-1} \left( \frac{2\theta}{\tau_m}+1\right) 
\sqrt{1-\left( \frac{2\theta}{\tau_m}+1\right)^2},
$$
taking the integral in $\theta$ from $-\tau_m$ to zero, and considering the orthogonality properties of Chebyshev polynomials of the second kind, we arrive at
\begin{equation}\label{thirdseries}
\begin{array}{l}
\dot c_0(t)-\frac{1}{2}\dot c_2(t)	=\frac{2}{\tau_m} c_1,
\\
\frac{1}{2} \left( \dot c_{i-1}(t)-\dot c_{i+1}(t)  \right)	=\frac{2i}{\tau_m} c_i,\ \   i\geq 2.
\end{array}
\end{equation}
Letting $\mathbf{c}=\left[c_0^T\ c_1^T\ \cdots \right]^T$, $\mathbf{e_1}=[1 \ 0\ \cdots]^T$ and $\mathbf{1}=[1\ 1\ \cdots ]^T$,  differential equations (\ref{secondseries}) and (\ref{thirdseries}) can be written as
\begin{equation} \label{infseries}
\left\{
\begin{array}{lll}
\Pi_\infty \dot{\mathbf{c}}(t) &=&\Sigma_\infty \mathbf{c}(t) +\left(\mathbf{e_1}\otimes B\right) u(t), \\
y(t)&=&\left(\mathbf{1}^T \otimes C\right)  \mathbf{c}(t),
\end{array}\right.
\end{equation}
with 
{\small
	\begin{equation}\label{defPiN2-2}
	\Pi_\infty=\frac{\tau_{m}}{4}\left[\begin{array}{rrrrrr}
	\frac{4}{\tau_m} &\frac{4}{\tau_m} & \frac{4}{\tau_m}&\cdots&\cdots&\cdots \\
	2& 0&-1 &&& \\
	&\frac{1}{2} &0 &-\frac{1}{2}&&\\
	&&\frac{1}{3}&0&-\frac{1}{3}&\\
	&&&\ddots &\ddots&\ddots\\
	\end{array}\right]\otimes I
	\end{equation}
}
and
\begin{equation}\label{defSigmaN2-2}
\Sigma_\infty=\left[\begin{array}{cccc}
R_0 & R_1& \cdots&    \\
& I_n &   & \\
&   & \ddots & 
\end{array}
\right].
\end{equation}

System (\ref{infseries})-(\ref{defSigmaN2-2}) can be interpreted as alternative  representation of (\ref{pde-inf}), and of the original delay equation (\ref{sys}).
At the same time, system (\ref{approx-extra}), obtained after a spectral discretization and at the basis of the approach spelled out in the previous sections, is equivalent to 
\begin{equation}\label{approx3}
\left\{\begin{array}{rcl}
\Pi_N\dot c_N(t)&=&\Sigma_N c_N(t)+(\mathbf{1}_N\otimes B) u(t),\\
y(t)&=&(\mathbf{1}_N^T\otimes C) x(t)
\end{array}\right.
\end{equation}
 since
\[
G_N\Sigma_N^{-1} (\mathbf{1}_N\otimes B)=H_N,\ \ (\mathbf{1}_N^T\otimes C) G_N^{-1}=F_N.
\]

System (\ref{approx3}) can be  obtained from (\ref{infseries}) by truncating the state to the first $N+1$ block components (or, equivalently applying a Galerkin projection on the range of $\left( [I_{n(N+1)} \ 0 \cdots\ 0]^T\right)$.
Algorithms~\ref{algoritme}-\ref{algoritm2} only rely on the use of submatrices of $\Sigma_N,\Pi_N$, at top-left position (recall definition (\ref{defGGN}) of $G_N$), which are on their turn ``submatrices" of $\Pi_{\infty}$ and $\Sigma_\infty$.  Therefore, these algorithms can be interpreted as applied to  infinite-dimensional system (\ref{infseries}) directly. 
We note that, for case of approximating characteristic roots by the reciprocal of eigenvalues of $\mathbf{G}_k$, a related interpretation of Algorithm~\ref{algoritme} is given  in \cite{jarlebring-sisc}, in terms of an operator eigenvalue problem.

Finally, an overview of the developments throughout the Sections~\ref{secfinal}-\ref{secproject} is given by Figure~\ref{figoverview}
\begin{figure}
\begin{center}
\resizebox{13.5cm}{!}
{

\tikzstyle{block} = [rectangle, text width=4cm, text badly centered]
\tikzstyle{block2} = [rectangle, text width=2cm, text badly centered]

\tikzstyle{blockline} = [draw, dashed, fill=white, rectangle, text width=10cm, text badly centered]
\tikzstyle{blockline2} = [draw, dashed, fill=white, rectangle, text width=3.5cm, text badly centered]
\begin{tikzpicture}[>=stealth'] 

\node [block] (system) {System};
\node [block, right=0.1cm of system] (approx) {Delay Lyapunov matrix \\ (approximation)}; 
\node [block, right=0.1cm of approx] (standard) { Variable of standard Lyapunov matrix equation};

\node [block,text width=3cm, below=0.3cm of system] (systemeq) {$A_0$,$\ldots$,$A_m$,$B$,$C$\\ $\tau_1$,$\ldots$,$\tau_m$};
\node [block, right=0.6cm of systemeq] (approxeq) {$P(t)$};
\node [block,text width=3cm, right=1.1cm of approxeq] (xeq) {};

\node [block, below=1.3cm of systemeq] (seq) {$\mathcal{A}_N$,$B_N$,$C_N$\\ $\sim$$C F_N$,$G_N$,$H_N$\\\footnotesize{$\dim=(N+1)n$}};
\node [block2, right=1.1cm of seq] (aeq) {$\mathcal{P}_N(t)$};
\node [block2, right=2.1cm of aeq] (st) {$\begin{array}{c}P_N\\ Q_N\end{array}$};

\draw [->, thick] (st) edge[dashed] (aeq);

\node [block, below=1.2cm of seq] (fseq) {$\mathbf{F}_k$,$\mathbf{G}_k$,$\mathbf{H}_k$\\\footnotesize{$\dim={kr}$}};
\node [block2, right=1.1cm of fseq] (faeq) {$\mathbf{P}_k(t)$\\\footnotesize{$\text{rank}\leq{2kr}$}};
\node [block2, right=2.1cm of faeq] (fst) {$\mathbf{Q}_k$\\\footnotesize{$\dim={kr}$}};

\draw [->, thick] (fst) edge[dashed] (faeq);
\draw [->,thick] (systemeq) edge (seq);

\node [blockline2, below=0.4cm of systemeq] (spdi) { \textit{Spectral Discretization}};

\draw [->,thick] (seq) edge (fseq);
\draw [->,thick] (aeq) edge (faeq);
\draw [->,thick] (st) edge (fst);

\node [blockline, below=0.6cm of aeq] (prk) { \textit{Projection on Krylov Space} ($2k\leq N$)};

\node[fit= (spdi) (seq) (aeq) (st) (prk), 
draw, inner sep=0.15cm, fill opacity=0.1,fill=black!70!white]   (Box)           {};


\node[below left= 0.5cm and -1cm of system] (point1) {};
\node[right=10.2cm of point1] (point2) {};

\node [block, left=0cm of systemeq] (equiv) {$\sim\Pi_{\infty}$,$\Sigma_{\infty}$,\\
	$(\mathbf{e}_1\otimes B)$, $(\mathbf{1}^{T}\otimes C)$};

\draw [->,thick] (point1) edge[bend right=70] node [left,text width=2cm,text badly centered] {	Projection} (fseq);
\draw [->,thick] (point2) edge[bend left=70] (fst);

\end{tikzpicture}

} 
\end{center}
\caption{Overview of different steps in the derivations and corresponding notations. If only the $\mathcal{H}_2$ norm needs to be approximated, a Kyrlov space of dimension $k$ such that $k\leq N$ is sufficient. 
With the relation between $k$ and $N$ satisfied, the delay Lyapunov matrix and $\mathcal{H}_2$ norm approximations, obtained after projection of the discretized system, do not depend on the value of $N$, only on $k$. This leads to the interpretation spelled out in Section~\ref{secinfcheb} and illustrated with the curves arrows.	
	\label{figoverview}}
\end{figure}
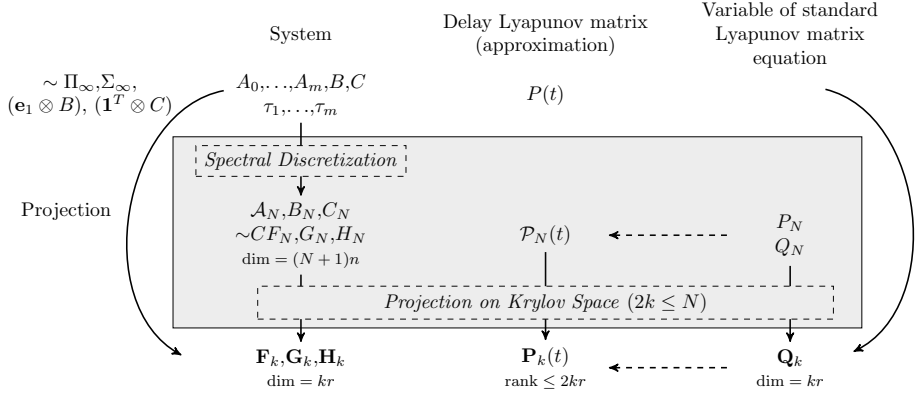

\section{Experiments}\label{secapplic}


We first consider the model for a heat exchanger described in~\cite{vyhlidal}, for which the controller (based on a combination of static state feedback and  proportional integral (PI) control) has been determined by optimizing the spectral abscissa using the method of~\cite{wimiet}. The closed-loop system is described by a delay equation of form (\ref{sys}) with $n=5$ state variables and $m=7$ delays. The non-zero elements of matrices $A_i,\ i=0,\ldots,7$, are specified as in the following table,
\begin{equation}\label{ex11}
{\small
\begin{array}{l|l}
A_0&\ \ \   (2,1):\ \frac{1}{3},\ (2,2):\ -\frac{2}{3},\ (3,3):\ -\frac{1}{3}\ (5,4):\ -1
\\
A_1&\ \ \  (4,3):\  0.0324
\\
A_2&\ \ \   (1,1):\ -0.07142857143
\\
A_3&\ \ \   (4,4):\   -0.04
\\
A_4&\ \ \   (2,4):\   \frac{1}{3}
\\
A_5&\ \ \  (1,1):\   -0.01219364644,\ (1,2):\ -0.05460277319,\ (1,3):\ -0.1005215423 \\
& \ \  \ (1,4):\ -0.1290047174,\ (1,5):\ 0.005063395489
\\
A_6&\ \ \   (3,2):\  0.3133333333
\\
A_7&\ \ \    (1,2):\      0.01714285714,
\end{array}
}
\end{equation}
while input matrix $B$, output matrix $C$ and the delay values are given by
\begin{equation}\label{ex12}
{\small
B=
\left[\begin{array}{c} 
0.0278571429
\\
0
\\
0
\\
0
\\
0
\end{array}\right],\ \ C=I,\ 
\left[\begin{array}{c}
\tau_1\\
\tau_2\\
\tau_3\\
\tau_4\\
\tau_5\\
\tau_6\\
\tau_7
\end{array}\right]=
\left[\begin{array}{c}
 2.8\\
6.5\\
9.2\\
13\\
13.2\\
18\\
40
\end{array}\right].
}
\end{equation}

In Figure~\ref{figheater2} we plot the normalized error on the Lyapunov matrix, 
\begin{equation}\label{figer1e}
\frac{\max_{t\in[0,\ t_{\max}]} |P(t)-\mathbf{P}_k(t)|}{\max_{t\in[0,\ t_{\max}]} |P(t)|}, 
\end{equation}
for  $t_{\max}=50$ as a function of $k$, computed using Algorithm~\ref{algoritm2}. We also show the normalized error on the $\mathcal{H}_2$ norm,
\begin{equation}\label{figer2e}
\frac{\left|\|\Upsilon\|_2-\|\mathbf{\Upsilon}_k\|_2\right|}{\|\Upsilon\|_2}.
\end{equation}
Finally, the evolution of selected elements of the Lyapunov matrix $P(t)$ is shown in Figure~\ref{simul}.

Even though the dimension $n$ is small, the advantage of using a projection method is significant. To illustrate this, when choosing $k=100$ the application of Algorithm~\ref{algoritm2} involves the solution of a matrix Lyapunov equation of dimension $100\times 100$, leading to an error on the $\mathcal{H}_2$ norm approximation smaller than $2\ 10^{-8}$, see Figure~\ref{figheater2}. At the same time, when discritizing the delay equation into an ordinary equation as in Section~2, with $N=19$, and computing an $\mathcal{H}_2$ norm approximation via (\ref{approxH2}), one also has to solve a Lyapunov equation of size $100\times 100$, but the error is then around $10^{-6}$. The underlying reason is that the former approach can be interpreted in terms of a much more accurate discretization with  $N>99$ points, followed by $100$ steps of an Arnoldi iteration (see Figure~\ref{figoverview}).

\begin{figure}
	\begin{center}
		\resizebox{!}{8cm}{\includegraphics{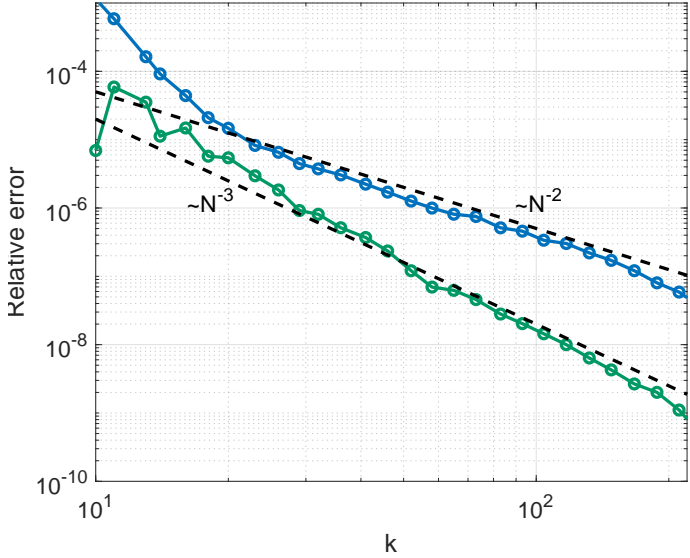}}
	\end{center}
	\caption{\label{figheater2} Normalized errors (\ref{figer1e}) with $t_{\max}=50$ (blue curve) and (\ref{figer2e}) (green curve) as a function of $k$ for system (\ref{sys}) with matrices and delays (\ref{ex11})-(\ref{ex12}). The dashed lines indicate the rates $\mathcal{O}\left(k^{-2}\right)$ and $\mathcal{O}\left(k^{-3}\right)$. }
\end{figure}

\begin{figure}
	\begin{center}
		\resizebox{!}{6cm}{\includegraphics{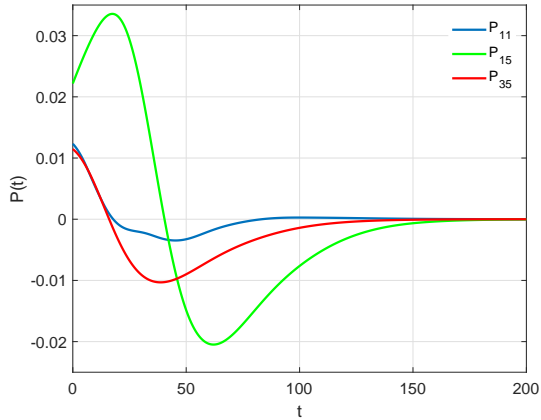}}
	\end{center}
	\caption{\label{simul} Some elements of $P(t)$ as a function of $t$ for system (\ref{sys}) with matrices and delays 
		(\ref{ex11})-(\ref{ex12}).}
\end{figure}

For the second and third example we  consider   models described
by partial differential equations (PDE)
\begin{equation} \label{pde1}
\frac{\partial v(x,t)}{\partial t}=
\frac{\partial^2 v(x,t)}{\partial
  x^2}-\frac{1}{4}x\ v(x,t-1)
\end{equation}
and
\begin{equation} \label{pde2}
\frac{\partial v(x,t)}{\partial t}=
\frac{\partial^2 v(x,t)}{\partial
	x^2}-2\sin(x)v(x,t)+2\sin(x)v(\pi-x,t-1),
\end{equation}
with in both cases $v(0,t)=v(\pi,t)=0$. The equations, which are variants of
examples in~\cite{Breda:2009:PDE},  can be interpreted as
heat equations describing in the temperature in a rod, controlled with distributed delayed feedback. In (\ref{pde1}) the feedback is proportional and localized, in (\ref{pde2}) it of Pyragas type and non-localized.
We discretize differential equations (\ref{pde1})-(\ref{pde2}) in space using central differences. For (\ref{pde2}), for instance, this resulting a systems of the form (\ref{sys-z}) with matrices
\[
A_0=\left(\frac{n-1}{\pi} \right)^2
\left[\begin{array}{ccccc}
-2 & 1 &  & & \\ 
1 & -2 & 1 & & \\
&\ddots&\ddots &\ddots &  \\
& &1 & -2 & 1\\
&&&1 &-2
\end{array}\right] -2\Delta_0
\]
and $A_1=2\Delta_{-1}$. Here $A_0,\ A_1\in\mathbb{R}^{n\times n}$, and $\Delta_0$ is a diagonal matrix containing the elements of the vector 
$\left(0,\sin\left(\frac{1}{n-1}\pi\right),\cdots, \sin\left(\frac{n-2}{n-1}\pi\right),0\right)$ 
on its diagonal, while $\Delta_{-1}$ is the anti-diagonal vector based on the same vector. 
For both (\ref{pde1}) and (\ref{pde2})  we we take $n=10000$ 
and output matrix $C=(1,1,\ldots,1)/\|(1,1\ldots,1)\|_2$, i.e.,
the output is the average temperature of the rod. We further assume $B=C^T$.

In Figure~\ref{figpde}  we display the normalized error (\ref{figer1e}) on the Lyapunov matrix for the interval $[0,\ t_{\max}]=[0,\ 3],$ as well as the normalized error on the associated $\mathcal{H}_2$ norm approximation, as a function of $k$. 
To shed a light on the computation time, for system (\ref{pde2}) and $k=100$ the computation time for the delay Lyapunov matrix, respectively $\mathcal{H}_2$ norm\footnote{As can be seen from (\ref{approxUk}) only $\mathbf{V}_k$  needs to be available to evaluate $\mathbf{P}_k(t)$ at $t=0$.},  was $42$ seconds, respectively $4.8$ seconds, using MATLAB R2017b on a laptop with Intel Core i7 2.80 GHz processor and 16GB RAM.

\begin{figure}
	\begin{center}
		\resizebox{!}{9cm}{\includegraphics{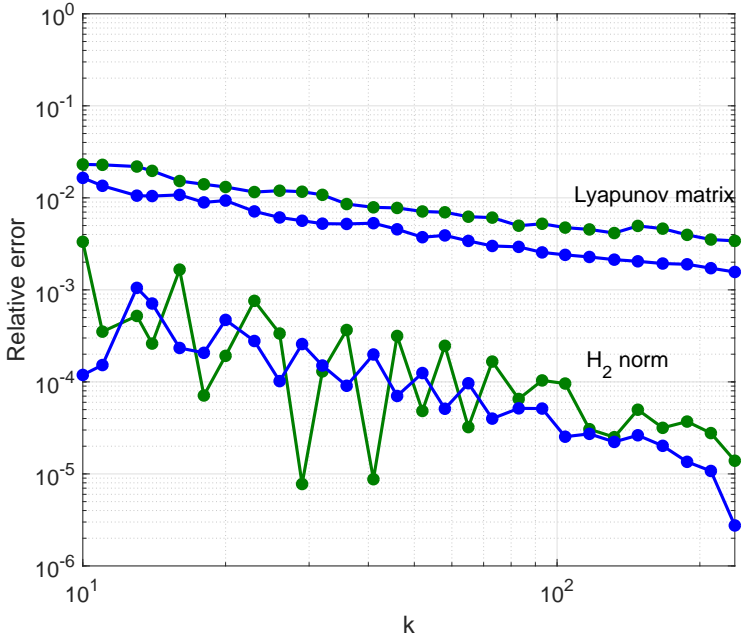}}
	\end{center}
	\caption{\label{figpde}
		 Normalized errors (\ref{figer1e}), with $t_{\max}=3$, and (\ref{figer2e}) as a function of $k$, with matrices obtained form the spatial discretization of (\ref{pde1}) (blue curves) and (\ref{pde2}) (green curves), such that $n=10000$.  }
\end{figure}

Let us now comment on the convergence behavior shown in Figures~\ref{figheater2} and \ref{figer1e}. 
The  experiments carried out for
\[
k r>>n,
\]
 which is natural if $n$ is small as for the first presented example, indicate an asymptotic rate of $\mathcal{O}(k^3)$, respectively $\mathcal{O}(k^2)$  for the $\mathcal{H}_2$ norm, respectively the delay Lyapunov matrix approximation.  These rates are similar to those obtained by the spectral discretization in Section~2 (as a function on $N$), hence, the projection step does not result in a slowing down of the asymptotic convergence rate (recall the arguments in Section~\ref{parprop} where the rates are, among others, related to the lack of smoothness of $P(\cdot)$), even though it is highly advantageous from the point of view of computational complexity. Some intuition behind this observation is given by Theorems~\ref{theoremmoment}  and \ref{maintheoreme}: by construction precisely the matching moment between $\Upsilon$ and $\Upsilon_N$ carry over to the projected transfer function $\mathbf{\Upsilon}_k$.  
 In experiments with very large $n$, we have  $kr << n$ for a realistic range for $k$ values as in the second and third example,  and the observed decay rate is slower,
 which is illustrated by a comparison between Figure~\ref{figpde}
and Figure~\ref{figheater2}.  A possible explanation is that unlike the previous case a low-rank approximation of Lyapunov matrix $P(t)\in\RR^{n\times n}$ is enforced by construction.

Inherent to the projection approach, the efficiency of the computational approach depends on whether or not accurate low rank approximations exist, whose determining factors are not well understood, and the projected system matrix $\mathbf{G}_{2k}$ must be stability preserving (this is the case for most problems  and it was an important consideration in the methodological choices, but not always - a counter example  is the 2nd example in \cite{jarpol} for $n=1023$, where spurious roots are observed in the right half plane). The latter is not necessarily a strong limitation for the $\mathcal{H}_2$ norm computation, since the $\mathcal{L}_2$ norm of the low-order, projected transfer function $\mathbf{\Gamma}_k$ can still be computed using other techniques different from solving the Lyapunov equation directly.  All these issues, and related fixes are subject for further investigation.

\section{Concluding remarks}\label{secconcl}

A novel algorithm for computing delay Lyaopunov matrices and $\mathcal{H}_2$ norms has been presented, which is the first algorithm generally applicable to linear time-delay systems with multiple delays and at the same time having favorable scaling properties with respect to dimension $n$ (the examples  with $n=10000$ in Section~\ref{secapplic} indicates the potential of the approach).  Furthermore, the algorithm is dynamic in nature, in the sense that the computations can be resumed if the accuracy is judged insufficient.  The algorithm results in approximations of the delay Lyapunov matrix in an {explicit form} given by~(\ref{approxUk}). 

Computing delay Lyapunov matrices induces a lot of challenges and complication compared to solving classical Lyapunov matrix equations (making the leap from an algebraic equation to  matrix valued boundary problem (\ref{BVP}) with a non-smooth solution).  At the same time the research  is in still an initial  phase, with to the best of our knowledge, for the moment only two methods available applicable to large problems, the presented one and the one of \cite{jarpol}, which are fundamentally different.  Therefore we hope that the methodology, results and observations trigger further research on the topic.

Finally we come back to the assumption of exponential stability of (\ref{sys}). It implies that computing the Lyapunov matrix (when alternatively defined as the solution of (\ref{BVP}) and not via the fundamental solution), with the presented method is not useful in the context of verifying recent stability conditions, precisely expressed precisely in terms of the delay Lyapunov matrix (see, e.g., \cite{cuvas} and the references therein). Yet, the overall algorithm starts with iterations of  Algorithm~\ref{algoritme}, which corresponds to the Infinite-Arnoldi algorithm \cite{jarlebring-sisc} for eigenvalue computations and which does require an exponentially stable system. Consequently, from the output of the first step, more precisely from the spectrum of $\mathbf{G}_{2k}$, we directly obtain a certificate whether or not the system is exponentially stable.

\section*{Acknowledgements}

The first author thanks V.L. Kharitonov for an invitation to give a talk in a session on Lyapunov matrices at the 14th IFAC Workshop on Time-Delay System, which was the starting point of this work.
The research was supported by the project C14/17/072 of the KU Leuven Research Council, by the project G0A5317N of the Research Foundation-Flanders (FWO - Vlaanderen), and by the project UCoCoS, funded by the European Unions Horizon 2020 research and innovation programme under the
Marie Sklodowska-Curie Grant Agreement No 675080.

\bibliographystyle{plain}

\bibliography{common-VF,wim-VF,referentielijst}

\end{document}